\numberwithin{equation}{section}
\newtheorem{thm}{Theorem}[section]
\newtheorem{cor}[thm]{Corollary}
\newtheorem{lem}[thm]{Lemma}
\newtheorem{pro}[thm]{Proposition}
\newtheorem{df}[thm]{Definition}
\newtheorem{lem-df}[thm]{Lemma-Definition}
\newtheorem{conj}[thm]{Conjecture}
\theoremstyle{definition}
\newtheorem{rmk}[thm]{Remark}
\newtheorem{ex}[thm]{Example}
\newtheorem{rmk-df}[thm]{Remark-Definition}
\newtheorem*{thm*}{Theorem}
\newtheorem*{rmk*}{Remark}
\newcommand{\beq}{\begin{equation}}
\newcommand{\enq}{\end{equation}}
\newcommand{\beqn}{\begin{equation*}}
\newcommand{\enqn}{\end{equation*}}
\newcommand{\ra}{\rightarrow}
\newcommand{\hra}{\hookrightarrow}
\newcommand{\longra}{\longrightarrow}
\newcommand{\dra}{\dashrightarrow}
\newcommand{\mC}{\mathbb{C}}
\newcommand{\mH}{\mathbb{H}}
\newcommand{\mP}{\mathbb{P}}
\newcommand{\mU}{\mathbb{U}}
\newcommand{\caA}{\mathcal{A}}
\newcommand{\caC}{\mathcal{C}}
\newcommand{\caD}{\mathcal{D}}
\newcommand{\caE}{\mathcal{E}}
\newcommand{\caF}{\mathcal{F}}
\newcommand{\caH}{\mathcal{H}}
\newcommand{\caK}{\mathcal{K}}
\newcommand{\caM}{\mathcal{M}}
\newcommand{\caO}{\mathcal{O}}
\newcommand{\caU}{\mathcal{U}}
\newcommand{\caW}{\mathcal{W}}
\DeclareMathOperator{\Div}{div}
\DeclareMathOperator{\Hom}{Hom}
\DeclareMathOperator{\Ext}{Ext}
\DeclareMathOperator{\caExt}{{\mathcal Ext}}
\DeclareMathOperator{\Cliff}{Cliff}
\DeclareMathOperator{\rk}{rk}
\begin{document}

\title{On the rank of the flat unitary summand of the Hodge bundle}

\author{V\'ictor Gonz\'alez-Alonso, Lidia Stoppino, Sara Torelli\footnote{The first author was partially supported by ERC StG 279723 ``Arithmetic of algebraic surfaces'' (SURFARI) and the project MTM2015-69135-P of the spanish ``Ministerio de Econom\'ia y Competitividad''. The second author have been partially supported by FAR Uninsubria. The third author have been partially supported by Fondi dottorato Pavia. The second and third authors are members of G.N.S.A.G.A.--I.N.d.A.M and have been partially supported by MIUR (Italy) through  PRIN 2012 ``Spazi di Moduli e Teoria di Lie '', and PRIN 2015 ``Moduli spaces and Lie theory''}.}

\maketitle

\begin{abstract}
Let $f\colon S\to B$ be a non-isotrivial fibred surface. We prove that the genus $g$, the rank $u_f$  of the unitary summand of the Hodge bundle $f_*\omega_f$ and the Clifford index $c_f$ satisfy the inequality $u_f \leq g - c_f$. Moreover, we prove that if the general fibre is a plane curve of degree $\geq 5$ then the stronger bound 
$u_f \leq g - c_f-1$ holds. In particular, this provides a strengthening of the bounds of \cite{BGN} and of \cite{FNP}. The strongholds of our arguments are the deformation techniques developed by the first author in \cite{Rigid} and by the third author and Pirola in \cite{PT}, which display here naturally their  power and depht.
\end{abstract}

%\noindent{\em 2000 Mathematics Subject Classification:} 14C20 (14J29, 14J30, 14J35, 14J40).

\section{Motivation and statement of the results}\label{sect-intro}

Let $f\colon S\to B$ be a fibred surface, i.e. a surjective morphism with connected fibres from a smooth projective surface $S$ to a smooth projective curve $B$. 
Call $g$ the arithmetic genus of the fibres, and suppose that $g\geq 2$. Let $q=h^0(S, \Omega_S^1)$ be the irregularity of the surface and $b=h^0(B, \omega_B)$ the genus of $B$.
Let $\omega_f = \omega_S \otimes \left(f^*\omega_B\right)^{\vee}$ be the relative dualizing sheaf. 
The {\em Hodge bundle} of $f$ is the rank $g$ vector bundle $E_f = f_*\omega_f$ on $B$. 
The recent papers \cite{CD:Answer,CD:Vector-bundles,CD:Semiample} have given new attention on the so-called Fujita decompositions of $E_f$ (see Theorem \ref{thm:Fuj-decomp})
\begin{equation} \label{eq:Fuj-decomp-surf}
E_f = \caO_B^{\oplus q_f} \oplus \caF = \caU \oplus \caA,
\end{equation}
where $q_f=q-b$ is the relative irregularity of $f$, $\caF$ is nef with $h^1\left(B,\caF\otimes\omega_B\right)=0$, $\caU$ is flat and unitary, and $\caA$ is ample.
We denote by $u_f$ the rank of the unitary summand $\caU$.
Notice that $ \caO_B^{\oplus q_f} \subseteq \caU$, hence $u_f\geq q_f$.

A natural issue arising in the study of  the geometry of $f$ is to find  numerical relations between $q_f$ and $u_f$ and the other invariants (such as  $g$, $b$, the self-intersection of the relative canonical divisor $K_f^2$, the relative Euler characteristic $\chi_f$, the relative topological characteristic $e_f= 12\chi_f-K_f^2$). 

In particular, the relations between the relative irregularity $q_f$ and the genus $g$ have been analyzed intensively since the first results and conjecture of Xiao in the '80's \cite{Xiao-P1}. 
Let us summarize the main known results. 

\begin{enumerate}
\item[(Q1)] (Beauville \cite[Appendix]{Deb-Beau}) $q_f\leq g$, and equality holds if and only if $f$ is trivial, i.e. $S$ is birational to $B\times F$ and $f$ corresponds to the first projection.
\item[(Q2)] (Serrano \cite{Ser-Iso}) If $f$ is isotrivial (i.e. its smooth fibres are all mutually isomorphic) but not trivial, then $q_f\leq \frac{g+1}{2}$.
\item[(Q3)] %\label{bound-xiao} 
(Xiao \cite{Xiao-P1}, \cite{Xiao-5/6}) If $f$ is non-isotrivial and $B\cong \mP^1$, then $q_f=q\leq \frac{g+1}{2}$. For non-isotrivial fibrations with arbitrary $B$ the bound $q_f\leq \frac{5g+1}{6}$ holds.
\item[(Q4)] (Cai \cite{Cai}) If $f$ is non-isotrivial and the general fibre is either hyperelliptic or bielliptic, the same bound $q_f\leq \frac{g+1}{2}$ holds. 
\item[(Q5)] Pirola in \cite{Pir-Xiao} constructs for non-isotrivial fibrations a higher Abel-Jacobi map whose vanishing implies $q_f\leq \frac{g+1}{2}$. 
\item[(Q6)] (Barja-Gonz\'alez-Naranjo \cite{BGN}) If $f$ is non-isotrivial, then $q_f\leq g-c_f$, where $c_f$ is the Clifford index of the general fibre of $f$. 
%\WarningL{NEW!} 
Favale-Naranjo-Pirola \cite{FNP} have recently proven the stronger inequality $q_f\leq g-c_f-1$ for families of plane curves of degree $d \geq 5$.
\end{enumerate}
Xiao's original conjecture \cite{Xiao-P1} that the first bound of %(\ref{bound-xiao})
(Q3) holds for any non-trivial fibration has been disproved by Pirola in \cite{Pir-Xiao} (more counterexamples have been found later by Albano and Pirola in \cite{alb-pir}) and was modified in \cite{BGN} as follows.
\begin{conj}[Modified Xiao's conjecture for the relative irregularity]\label{conj:mxiao}
For any non-isotrivial fibred surface $f\colon S\to B$ of genus $g\geq 2$ it holds 
\begin{equation} \label{eq:BGN}
q_f\leq \left\lceil \frac{g+1}{2} \right\rceil.
\end{equation}
\end{conj}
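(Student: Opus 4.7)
The plan is to split the fibrations according to the Clifford index $c_f$ of the general fibre, and to combine the main theorem of the present paper (which strengthens (Q6)) with the classical results (Q1)--(Q5).

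Since $\caO_B^{\oplus q_f}\subseteq \caU$ one has $q_f\leq u_f$, so the main inequality $u_f\leq g-c_f$ immediately gives
\beq
q_f\leq g-c_f.
\enq
Clifford's theorem yields $c_f\leq \lfloor(g-1)/2\rfloor$, with equality on a general smooth curve of genus $g$. Hence, whenever the general fibre is Clifford-generic,
\beq
q_f\leq g-\lfloor(g-1)/2\rfloor=\lceil(g+1)/2\rceil,
\enq
and Conjecture \ref{conj:mxiao} holds. This already settles the conjecture for every non-isotrivial fibration whose generic smooth fibre has maximal Clifford index in every genus $g\geq 2$.

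For fibres of small Clifford index I would proceed case by case. The hyperelliptic and bielliptic situations are handled by Cai's theorem (Q4), while families of plane curves of degree $d\geq 5$ are covered by the sharper bound $u_f\leq g-c_f-1$ proved in the present paper. The remaining open cases are the low-gonality fibres (principally trigonal and low-tetragonal), for which the natural strategy is to invoke the deformation techniques of \cite{Rigid} and \cite{PT}: given a pencil $g^1_k$ of low degree on the fibre, one analyses the restriction of the Higgs field $\theta\colon \caU\to \caA\otimes\omega_B$ to the $k$-dimensional subspace of $H^0(\omega_F)$ associated with the pencil and tries to produce enough flat sections to force isotriviality, contradicting the standing hypothesis $q_f>\lceil(g+1)/2\rceil$.

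The main obstacle is the intermediate range, where the Clifford index of the fibre is neither maximal nor very small. For instance, if $c_f$ is of order $g/4$, then $u_f\leq g-c_f$ only yields $q_f\leq 3g/4$, which is well above $\lceil(g+1)/2\rceil$. Closing this gap would require either a sharpening of the main theorem to a $c_f$-independent estimate of the form $u_f\leq \lceil(g+1)/2\rceil$, or a refinement coupling the Higgs-theoretic analysis with Pirola's higher Abel--Jacobi map of (Q5) in order to forbid the existence of too many flat sections of $E_f$ when $q_f$ approaches $g/2$. In either approach the crucial geometric input should be the interaction between the monodromy of the flat unitary summand $\caU$ and the Brill--Noether stratification of the general fibre.
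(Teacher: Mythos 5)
The statement you are trying to prove is a \emph{conjecture} in the paper, not a theorem: the paper explicitly labels it ``Modified Xiao's conjecture'' and offers no proof, only the remark (following \cite{BGN}) that the case of maximal Clifford index $c_f=\lfloor(g-1)/2\rfloor$ is known. So there is no ``paper's own proof'' to compare against, and no complete proof of this statement currently exists in the literature.

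Your proposal in fact confirms this: it correctly reduces to the known cases (maximal Clifford index via $q_f\leq u_f\leq g-c_f$, the hyperelliptic and bielliptic cases via Cai's theorem (Q4), plane curves via the sharper bound of the paper and \cite{FNP}) but then openly acknowledges an unclosed gap in the intermediate range of Clifford index, where $g-c_f$ can be significantly larger than $\lceil(g+1)/2\rceil$. That gap is real and is precisely why the statement is still a conjecture. Two further small points are worth noting. First, for the purpose of bounding $q_f$ one does not need the present paper's strengthening to $u_f$: the original bound $q_f\leq g-c_f$ of \cite{BGN} already gives the maximal-Clifford-index case, so invoking $u_f\leq g-c_f$ adds nothing to the $q_f$-conjecture. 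Second, the sketch for low gonality (``analyse the Higgs field on the $g^1_k$-subspace and produce enough flat sections to force isotriviality'') is not an argument; it is a heuristic, and as written there is no mechanism that actually produces the contradiction. In short, the proposal is a useful summary of what is known, but it is not a proof, and it should not be presented as one.
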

The results of \cite{BGN} imply the conjecture in the (general) case of maximal Clifford index $c_f=\left\lfloor\frac{g-1}{2}\right\rfloor$. 
All counterexamples to the original conjecture found by Albano and Pirola satisfy equality for the modified one.

\medskip
In this paper we address the same question for the unitary rank $u_f$: what are the relations between $u_f$ and $g$?
Let us discuss the possible counterparts of the results described above.
\begin{enumerate}
\item[(U1)] Clearly $u_f\leq\rk E_f=g$ and equality holds if and only if $f$ is locally trivial (i.e. $f$ is a holomorphic fibre bundle): this follows immediately observing that $u_f=g$ if and only if $\chi_f=\deg E_f=0$.
\item[(U2)] In the case of isotrivial but non-trivial fibrations, there is no better bound for $u_f$, analgous to Serrano's bound for $q_f$. Indeed, it is easy to construct such fibred surfaces with $u_f = g$ as appropriate quotients of products of curves. We do not know wether isotrivial non-locally trivial fibrations satisfy some bound or $u_f$ can be arbitrarily close to $g$. 
\item[(U3)]  If  $B\cong \mP^1$, then clearly the flat unitary summand is trivial, so $u_f= q_f=q\leq \frac{g+1}{2}$ for the non-isotrivial case. Moreover, as observed in \cite[Lemma 3.3.1]{LZ}, Xiao's argument can be extended to bound the rank of the whole unitary summand, hence $u_f\leq \frac{5g+1}{6}$ for $f$ non-isotrivial (the authors only consider semistable fibrations, but Xiao's proof works identically for any fibration).
\item[(U4)] Lu and Zuo in \cite[Theorem 4.7]{LZ-hyper} prove that for a non-isotrivial hyperelliptic fibration there is a finite base change such that the flat summand of the new Hodge bundle is trivial, and moreover it is known that $u_f$ is non-decreasing by base change (Remark \ref{rmk: base change}), and so one can apply the bound for the relative irregularity proved by Cai and get the bound
$u_f\leq \frac{g+1}{2}$. 
\item[(U5)] Pirola's construction is specific to the case of the trivial part of $E_f$, it does not apply directly to the flat unitary summand.
\end{enumerate}

In case (4) the bound follows by trivializing the unitary flat part by base change. However, the examples of Catanese and Dettweiler precisely show that this is not always possible: indeed they prove that  the flat summand can be trivialized via a base change if and only if the image of the monodromy map associated to the unitary summand is finite (see also \cite{barja-fujita}). 
Catanese and Dettweiler provide examples where the monodromy image is not finite. 
%\WarningL{Da qui nuovo}

Let us remark that after \cite{CD:Answer} the modified Xiao conjecture implies:% \WarningL{non e' equivalente!}
\begin{conj}%[modified Xiao's conjecture] 
\label{conj:cong}
For any non-isotrivial fibred surface $f\colon S\to B$ of genus $g\geq 2$ such that the flat unitary summand has finite monodromy, %the image of the monodromy is finite, 
it holds 
\begin{equation}\label{eq:cong}
u_f\leq \left\lceil \frac{g+1}{2} \right\rceil.
\end{equation}
\end{conj}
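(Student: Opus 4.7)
The plan is to derive Conjecture \ref{conj:cong} as a conditional consequence of Conjecture \ref{conj:mxiao}, by means of an étale base change that trivialises the flat unitary summand. This mirrors the strategy outlined in item (U4) for hyperelliptic fibrations, where Lu-Zuo supply the required trivialisation; here the hypothesis of finite monodromy plays the analogous role, via the Catanese-Dettweiler theorem invoked in the paragraph preceding the statement.

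\emph{Step 1 (base change trivialising $\caU$).} Since the monodromy of $\caU$ has finite image, by \cite{CD:Answer} there exists a finite étale cover $\pi\colon\tilde B\to B$ such that $\pi^*\caU\cong\caO_{\tilde B}^{\,\oplus u_f}$. Let $\tilde S$ be a smooth relatively minimal model of $S\times_B\tilde B$ and $\tilde f\colon\tilde S\to\tilde B$ the induced fibration. Because $\pi$ is finite étale and $f$ is non-isotrivial, $\tilde f$ is again non-isotrivial with fibres of the same genus $g\geq 2$.

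\emph{Step 2 (bounding $u_f$ by $q_{\tilde f}$).} The natural comparison $\pi^*E_f\hookrightarrow E_{\tilde f}=\tilde f_*\omega_{\tilde f}$ embeds $\pi^*\caU$ as a flat unitary subsheaf of $E_{\tilde f}$. By the Narasimhan-Seshadri description, the flat unitary summand $\tilde\caU$ of $E_{\tilde f}$ is the maximal polystable subbundle of degree $0$, so $\pi^*\caU\subseteq\tilde\caU$. Writing $\tilde\caU=\caO_{\tilde B}^{\,\oplus q_{\tilde f}}\oplus\tilde\caU'$ with $h^0(\tilde\caU')=0$, the trivial sheaf $\pi^*\caU\cong\caO_{\tilde B}^{\,\oplus u_f}$ must be contained in the maximal trivial direct summand, yielding $u_f\leq q_{\tilde f}$.

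\emph{Step 3 (applying modified Xiao).} Conjecture \ref{conj:mxiao}, applied to the non-isotrivial fibration $\tilde f$ of fibre genus $g\geq 2$, gives $q_{\tilde f}\leq\lceil(g+1)/2\rceil$. Combining this with Step 2 produces the desired bound $u_f\leq\lceil(g+1)/2\rceil$.

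The main technical subtlety lies in Step 2: one needs both the injectivity of $\pi^*E_f\to E_{\tilde f}$ and the assertion that any trivial flat subsheaf of $\tilde\caU$ lies inside the trivial direct summand of rank exactly $q_{\tilde f}$. For semistable $f$ both points are standard (flat base change for the relative dualising sheaf, and complete reducibility of polystable degree-$0$ bundles; any further trivial summand would enlarge $q_{\tilde f}$). In the non-semistable case one first passes to a semistable reduction, which by Remark \ref{rmk: base change} does not decrease $u_f$, and then runs the above argument on the semistable model.
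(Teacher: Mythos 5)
Your argument reproduces exactly the paper's own derivation of Conjecture~\ref{conj:cong} from Conjecture~\ref{conj:mxiao}: use the finite-monodromy hypothesis together with \cite{CD:Answer} to find an \'etale base change $\pi$ trivialising $\caU$, observe that the trivial bundle $\pi^*\caU$ of rank $u_f$ sits inside $E_{\tilde f}$ and hence $u_f \leq q_{\tilde f}$, and then invoke modified Xiao for $\tilde f$. This is the same three-step implication given in the paragraph following the conjecture. One remark on your Step~2: the appeal to a Narasimhan--Seshadri-type maximality is more machinery than needed. Since $\pi$ is \'etale, flat base change gives an isomorphism $\pi^*E_f \cong E_{\tilde f}$ (not merely an inclusion, and in particular the inclusion you wrote has the opposite orientation from the one in Remark~\ref{rmk: base change}, which concerns possibly ramified base changes), and flatness, unitarity and ampleness are all preserved under \'etale pullback, so $\pi^*\caU$ is already the flat unitary summand of $E_{\tilde f}$. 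Being trivial, it is therefore contained in the trivial part, giving $q_{\tilde f} \geq u_f$ directly; no polystability argument is required.
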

%\WarningL{nuovo!}
Indeed, given a non-isotrivial fibred surface $f\colon S\to B$ of genus $g\geq 2$ such that $\caU$ has finite monodromy, we can perform an (\'etale) base change $\pi\colon  B' \to B$  obtianing a new fibred surface $f'\colon S'\to B'$ such that  $\pi^*\caU$ is trivial, hence in particular $u_f=\rk\pi^* \mathcal{U}\leq q_{f'}$. So modified Xiao for $q_{f'}$ implies equation (\ref{eq:cong}).

\medskip

In the case of infinite monodromy the bound \ref{eq:cong} is false: this follows from Remark 38 of \cite{CD:Vector-bundles} and from the construction of Lu \cite{Lu:counterexample}. In the first version of the paper this bound was stated as a conjecture. See the Section \ref{sec:examples}.

\medskip

Hence, we see a serious discrepancy between the behavior of the relative irregularuity and the one of unitary rank. 
It is therefore extremely interesting to see that the bounds of (Q6) for $q_f$ can be extended to $u_f$. The main result of this paper answers positively to this question.

\begin{thm} \label{thm:main}
Let $f: S \ra B$ be a non-isotrivial fibration of genus $g$, flat unitary rank $u_f$ and Clifford index $c_f$. Then
\begin{equation}\label{eq:main}u_f \leq g - c_f.\end{equation}
Moreover, if the general fibre is a plane curve
of degree $d \geq 5$, then
\begin{equation}\label{eq:fnp} u_f \leq  g - c_f - 1.\end{equation}
\end{thm}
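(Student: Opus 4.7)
The plan is to adapt the BGN and FNP arguments---originally designed only for the trivial summand $\caO_B^{\oplus q_f}\subseteq E_f$---to the full flat unitary summand $\caU$, by appealing to the local deformation techniques of \cite{Rigid} and \cite{PT}. The guiding observation is that both of those arguments only need a \emph{local} (over $B$) lifting of sections of $E_f$ to closed $1$-forms on the preimage in $S$, rather than a global one.

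Concretely, I would fix a general point $b_0\in B$ with smooth fibre $F=f^{-1}(b_0)$, and a small disk $\Delta\subset B$ around $b_0$ on which $\caU$ is trivialized as a local system. Pick a basis $\omega_1,\dots,\omega_{u_f}$ of flat sections of $\caU|_\Delta$; their values at $b_0$ span a subspace
\[
W:=\caU_{b_0}\subseteq H^0(F,\omega_F)
\]
of dimension $u_f$. The crucial first step is to prove that $W\subseteq\ker(\xi\smile\cdot)$, where
\[
\xi\smile\cdot\colon H^0(F,\omega_F)\longrightarrow H^1(F,\caO_F)
\]
is cup product with the Kodaira--Spencer class $\xi\in H^1(F,T_F)$ of $f$ at $b_0$. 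For sections of the trivial summand $\caO_B^{\oplus q_f}\subseteq\caU$ this is classical and immediate, as such sections are restrictions of global closed $1$-forms on $S$. For general flat sections of $\caU$, the adjoint-theoretic machinery of \cite{Rigid} and \cite{PT} provides a lift of each $\omega_i$ to a closed $1$-form on $f^{-1}(\Delta)$ whose fibrewise restriction recovers $\omega_i(t)$; the standard local cohomological computation then forces $\xi\smile\omega_i=0$.

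Once $W\subseteq\ker(\xi\smile\cdot)$ is established, non-isotriviality of $f$ guarantees $\xi\neq 0$, and I would invoke the Clifford-index kernel bound of \cite{BGN}: on any smooth curve $F$ of genus $g$ and Clifford index $c_f$, and for any nonzero $\xi\in H^1(F,T_F)$, one has $\dim\ker(\xi\smile\cdot)\leq g-c_f$. This gives $u_f=\dim W\leq g-c_f$, which is \eqref{eq:main}. For families whose general fibre is a plane curve of degree $d\geq 5$, I would replace this by the sharper bound $\dim\ker(\xi\smile\cdot)\leq g-c_f-1$ established by Favale--Naranjo--Pirola in \cite{FNP}, which exploits the additional syzygies in the homogeneous ideal of the canonical image of a plane curve, to conclude \eqref{eq:fnp}.

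The main obstacle is precisely the closed $1$-form lifting in the first step: unlike sections of the trivial summand, flat sections of $\caU$ are only defined after trivializing its unitary monodromy, which by the Catanese--Dettweiler examples can be infinite, so no global $1$-form on $S$ is a priori available. It is the flatness of the Hodge metric restricted to $\caU$---the very property characterizing the unitary summand---that the rigidity/adjoint framework of \cite{Rigid,PT} converts into the existence of the required closed local lifts, translating the algebraic datum ``flat section of $\caU$'' into the geometric datum ``closed $1$-form trivializing the Kodaira--Spencer cup product.''
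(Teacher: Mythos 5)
Your first step---using the Pirola--Torelli lifting lemma to show that the evaluation $W = \caU_{b_0}\subseteq H^0(F,\omega_F)$ of the unitary summand at a general point $b_0$ lies inside $\ker(\cup\xi_{b_0})$---is correct and is indeed the same observation that makes the paper's argument work. But the second step contains a genuine gap: the ``Clifford-index kernel bound'' you attribute to \cite{BGN}, namely that $\dim\ker(\cup\xi)\leq g-c_f$ for \emph{every} nonzero $\xi\in H^1(F,T_F)$ on a curve of Clifford index $c_f$, is simply false. A Schiffer variation at a general point $p\in F$ has $\ker(\cup\xi)=H^0(F,\omega_F(-p))$, of dimension $g-1$, so $\rk\xi=1$ no matter how large the Clifford index is. What BGN (Theorem \ref{thm-supp-bound} here) actually proves is a bound $\rk\xi\geq\Cliff(D)$ valid only when $\xi$ is \emph{minimally supported} on a divisor $D$ with both $h^0(D)\geq 2$ and $h^1(D)\geq 2$; without the first condition, $\Cliff(D)$ does not compete for the Clifford index and the bound is vacuous. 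There is no pointwise inequality of the kind you invoke, so your argument for \eqref{eq:main} does not close.

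What the paper does instead is precisely what is needed to handle this. The hypothesis $u_f>g-c_f\geq \frac{g+1}{2}$ (arguing by contradiction) feeds Proposition \ref{prop:local-supp-div} to produce, over a small disk, a divisor $\caD$ \emph{minimally} supporting the family with $h^1(\caO_{C_b}(\caD_{|C_b}))\geq 2$ on general fibres, and then splits into two cases depending on whether $h^0(\caO_{C_b}(\caD_{|C_b}))$ is $1$ or $\geq 2$. In the movable case your reasoning essentially works: Theorem \ref{thm-supp-bound} gives $\rk\xi_b\geq\Cliff(\caD_{|C_b})\geq c_f$, contradicting $\rk\xi_b\leq g-u_f<c_f$. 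In the rigid case, however, no Clifford estimate is available, and one must instead show that the lifted closed $1$-forms on the tube are all sections of a single sub-line-bundle of $\Omega^1_S$ (via Lemma \ref{lem:local-splitting}) and then run a Castelnuovo--de Franchis argument. Because the forms only live on a tube $f^{-1}(\Delta)$ rather than a compact surface, the classical Castelnuovo--de Franchis theorem does not apply, and the paper proves an ad-hoc tubular version (Theorem \ref{thm:tubularCdF}) for exactly this purpose. This is the technical heart of the proof and is entirely absent from your proposal; without it the rigid case cannot be excluded. Your argument for \eqref{eq:fnp} (the plane-curve case) does match the paper, since there FNP provide a direct lower bound $\rk\xi\geq d-3$ on the relevant infinitesimal deformations, with no case split needed.
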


\begin{rmk}\label{rmk:sharpness}
The bound (\ref{eq:main}) is sharp in case of general (maximal) Clifford index $c_f= \left\lfloor\frac{g-1}{2}\right\rfloor$, as proven in the last Section \ref{sec:examples}. 
What about the non-general case? Modified Xiao's conjecture would imply that  the first bound of (Q.6) for the relative irregularity   is not the optimal one. For the unitary rank, it is an open and extremely interesting question to understand the case of non general Clifford index. If one believes in modified Xiao's conjecture (Conjecture \ref{conj:cong}), extremal examples have to be searched for in the cases where the unitary summand has inifinite monodromy. Unfortunately, the infinite series of examples constructed in \cite{CD:Vector-bundles} (she so-called Standard Case, which are the same used in \cite{Lu:counterexample}) all satisfy $c_f\leq 1$ (see Example \ref{ex:CD}).
 \end{rmk}

Let us spend a couple of words about the proof of Theorem \ref{thm:main}. 
The proof of the first inequality (\ref{eq:main}) follows the lines of the original argument of \cite{BGN}, but in this more general setting new results are needed.
The key points of our arguments are the deformation thecnhniques developed by the first author in \cite{Rigid} and by the third author and Pirola in \cite{PT},  together with an ad-hoc Castelnuovo-de Franchis theorem for tubular surfaces.

\medskip

As in \cite{BGN}, we start by constructing a {\em supporting divisor } for the family (see Definitions \ref{df:supp-infinit} and \ref{df:supp-family}), which is, rougly speaking, a divisor $\mathcal D$ on $S$ whose restriction on the general fibre supports the first-order deformation induced by $f$. 
This divisor  is constructed thanks to the results of the first named author in \cite{Rigid}.
The proof of \cite{BGN} then proceeds by treating separatedly the cases when $\mathcal D$ is relatively rigid or movable.

The main difference between the present case and \cite{BGN} stems from the fact that sections of $\caU$ do not correspond to global differential 1-forms on $S$, while sections of the trivial part do. 
Instead, after the recent work of Pirola and Torelli \cite{PT}, local {\em flat} sections of $\caU$ can be identified with {\em closed} holomorphic 1-forms on ``tubes'' $f^{-1}\left(\Delta\right) \subset S$ around smooth fibres (see Lemma \ref{lem:PT2}). This local liftability of sections of $\caU$ is enough to deal with the movable case.

To treat the rigid case, we develop a tubular version of the classical Castelnuovo-de Franchis theorem, that we believe to be interesting on its own. 
\begin{thm} \label{thm:tubularCdF}
Let $f: S \ra \Delta$ be a family of smooth compact/projective curves over a disk. Let $\omega_1,\ldots,\omega_k \in H^0\left(S,\Omega_S^1\right)$ ($k \geq 2$) be closed holomorphic 1-forms such that $\omega_i \wedge \omega_j = 0$ for every $i,j$, and whose restrictions to a general fibre $F$ are linearly independent. Then there exist a projective curve $C$ and a morphism $\phi: S \ra C$ such that $\omega_i \in \phi^*H^0\left(C,\omega_C\right)$ for every $i$ (possibly after shrinking $\Delta$).
\end{thm}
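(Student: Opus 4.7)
The plan is to emulate the classical Castelnuovo--de Franchis argument, using the projectivity of the fibres of $f$ to compensate for the non-compactness of $S$.

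First, I analyze the local structure. The condition $\omega_i\wedge\omega_j=0$ forces the $\omega_i$ to be pointwise proportional generically, so the saturated subsheaf $L\subset\Omega_S^1$ they generate is a line bundle. Locally on a simply connected open, $\omega_1=df$ by the Poincar\'e lemma; the wedge condition gives $\omega_i=g_i\,df$ for holomorphic functions $g_i$, and $d\omega_i=dg_i\wedge df=0$ forces $g_i=h_i(f)$ to depend on $f$ alone. Hence the $\omega_i$ share a common local first integral. Viewing them as sections of $L$, the formula $p\mapsto[\omega_1(p):\cdots:\omega_k(p)]$ defines a rational map $\phi\colon S \dra \mP^{k-1}$ whose generic fibres are the 1-dimensional leaves of the foliation $\{\omega_i=0\}_{i}$.

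Next, I show that the image of $\phi$ is a projective curve. On the open $U\subset S$ where $\phi$ is defined, the Zariski closure $\overline{\phi(U)}\subset\mP^{k-1}$ has dimension $\dim U - 1 = 1$, so it is an algebraic curve, a finite union of irreducible projective curves. Fix $t_0\in\Delta$ such that $\omega_i|_{F_{t_0}}$ are linearly independent; the restriction $\phi|_{F_{t_0}}$ extends (being a rational map from a smooth projective curve) to a morphism $F_{t_0}\ra\mP^{k-1}$ with image an irreducible projective curve $C\subset\mP^{k-1}$. For any $t\in\Delta$, $\phi(F_t)$ is similarly an irreducible projective curve lying in $\overline{\phi(U)}$; since the latter has only finitely many components and $t\mapsto\phi(F_t)$ is continuous on the connected disk $\Delta$, one gets $\phi(F_t)\equiv C$ (after shrinking $\Delta$ if necessary), and hence $\phi(S)\subset C$. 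Let $\nu\colon\widetilde C\to C$ be the normalization; since $S$ is normal, $\phi$ factors through $\nu$ to give a rational map $\widetilde\phi\colon S\dra\widetilde C$. The map $\phi|_{F_{t_0}}\colon F_{t_0}\ra\widetilde C$ pulls back $k\geq 2$ linearly independent holomorphic 1-forms, so $g(\widetilde C)\geq 2$. By the classical rigidity argument (blowing up an indeterminacy point of $\widetilde\phi$ produces an exceptional $\mP^1$ which, mapping to a curve of positive genus, must be contracted), $\widetilde\phi$ extends to a genuine morphism $S\ra\widetilde C$.

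Finally, each $\omega_i$ annihilates the tangent directions along the fibres of $\widetilde\phi$ (they are unions of leaves, along which $\omega_1=df=0$), so locally $\omega_i=\widetilde\phi^*\eta_i$ for a local 1-form $\eta_i$ on $\widetilde C$; by the dominance of $\widetilde\phi$ these local expressions glue to a global meromorphic form on $\widetilde C$, and a standard computation at ramification points (in adapted coordinates $\widetilde\phi(z,w)=\zeta_0+z^e$) shows that the holomorphicity of $\omega_i$ forces $\eta_i\in H^0(\widetilde C,\omega_{\widetilde C})$. The main obstacle throughout is the constancy $\phi(F_t)\equiv C$: this is the step that converts the projectivity of the fibres of $f$ into the projectivity of the image curve, bridging the gap that the non-compactness of $S$ leaves open in the classical Castelnuovo--de Franchis argument.
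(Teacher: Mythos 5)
Your overall strategy (obtain a projective target curve from the compactness of the fibres, then show all of $S$ maps into it) is sound and is in fact the same Castelnuovo--de Franchis philosophy as the paper, though you organize it in the opposite order: the paper first builds a local analytic curve from the rank-$\leq 1$ condition on $\hat\psi$ and only at the end deduces compactness from the surjectivity of the restriction to a general fibre, whereas you start from the compact curve $C=\phi(F_{t_0})$ and try to show that the whole image lands in it. The second route is arguably cleaner, but the step that makes it work is exactly the one you leave unjustified.

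The gap is the claim that the Zariski closure $\overline{\phi(U)}\subset\mP^{k-1}$ has dimension $1$, from which you deduce that it is a finite union of projective curves and that $\phi(F_t)$ is locally constant in $t$. Since $U$ is an open complex-analytic surface and not an algebraic variety, the dimension formula $\dim U-\dim(\text{generic fibre})$ has no a priori meaning for the Zariski closure of $\phi(U)$: a holomorphic map of generic rank $1$ from a non-compact surface into $\mP^N$ can have Zariski-dense image (already the graph of a transcendental function $x\mapsto e^x$ has $2$-dimensional Zariski closure). The assertion $\dim\overline{\phi(U)}=1$ is in fact logically equivalent to the constancy of $\phi(F_t)$ that you are trying to deduce from it, so the argument is circular as written. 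The correct way to close this is to work with the leaves directly: the local first integrals show that the level sets of $\phi$ are holomorphic arcs transverse to the fibration near a point of $F_{t_0}$ where $\phi|_{F_{t_0}}$ is non-constant; such a leaf through $p_0\in F_{t_0}$ meets every nearby $F_t$, so $\phi(p_0)\in\phi(F_t)$ for $t$ near $t_0$. Varying $p_0$ over an open arc of $F_{t_0}$ gives an open arc of $C=\phi(F_{t_0})$ contained in the irreducible projective curve $\phi(F_t)$, forcing $\phi(F_t)=C$ for $t$ near $t_0$, hence for all $t$ by connectedness of $\Delta$; equivalently, $\phi^{-1}(C)$ is an analytic subset of $U$ containing an open set, hence all of $U$. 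This yields $\phi(U)\subset C$ without any reference to the Zariski closure. A smaller point: your genus estimate $g(\widetilde C)\geq 2$ quietly uses that the $\omega_i|_{F_{t_0}}$ descend to holomorphic $1$-forms on $\widetilde C$, which you only establish in the final paragraph; the argument should be reordered so that descent and holomorphicity of the $\eta_i$ come before the genus bound and the elimination of indeterminacy.
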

This Theorem can be applied to conclude our argument, precisely because of the abovementioned closedness of 1-forms corresponding to sections of $\caU$.

%\WarningL{NEW}
The inequality (\ref{eq:fnp}) for fibrations whose general fibres are plane curves uses  the results of \cite{FNP}: as these regard infinitesimal deformations, the very same arguments as above apply to extend Favale-Naranjo-Pirola's inequality to hold for $u_f$.

In the last Section we discuss the known examles with large unitary summand.

\medskip

We end the introduction by giving an application of our result to the framework of the Coleman-Oort conjecture on Shimura varieties in the Torelli locus. 
This is a straightforward application of the theory developed in \cite{LZ-hyper} and  \cite{LZ}, just plugging in inequality (\ref{eq:main}), as in the proof of \cite[Theorem 1.2.2]{LZ}. We refer to this article for the definitions and notations. Here we just recall the following facts.

Let $\mathcal M_g$ be the moduli space of smooth projective curves of genus $g\geq 2$ and $\mathcal A_g$ the moduli space of $g$-dimensional principally polarized abelian varieties; consider both spaces  with a full level $\ell$ structure, in order to be fine moduli spaces. 
Let $j^\circ \colon \mathcal M_g\rightarrow \mathcal A_g$ be the Torelli morphism. The Torelli locus $\mathcal T_g$ is the closure in $\mathcal A_g$ of the image of $j^\circ$.
%Let $\mathcal M^1_{g,k}$ be the locus of curves with gonality $\leq k$ (which is irreducible of dimension $\min\{2g-5+k, 3g-3\}$.
%Call $\mathcal T^\circ_k$ the image of  $\mathcal M^1_{g,k}$ in $\mathcal A_g$ via $j^\circ$. 
Let $\mathcal M_g(c)$ be the locus of curves in $\mathcal M_g$ of Clifford index $ c$. 
Call $\mathcal T^\circ_{g}(c)$ the image of  $\mathcal M_g(c)$ in $\mathcal A_g$ via $j^\circ$. 
Let $\mathcal T_g(c)$ be the closure of $\mathcal T^\circ_g(c)$. Recall that for maximal Clifford index $c=\left\lfloor \frac{g-1}{2}\right\rfloor$, $\mathcal T_{g}(c)=\mathcal T_g$.
We say that a subvariety $Z\subset \mathcal A_g$ is contained generically in  $\mathcal T_g(c)$ if $Z\subseteq  \mathcal T_g(c)$ and $Z\cap  \mathcal T^\circ_g(c)\not =\emptyset$.

Recall that, given $C\subset \mathcal A_g$ a smooth closed curve, the canonical Higgs bundle $\mathcal E_C$ on $C$ is the Hodge bundle given by the universal family of abelian varieties restricted over $C$. The bundle $\mathcal E_C$ decomposes as $\mathcal E_C=\mathcal F_C\oplus \mathcal U_C$ where $\mathcal U_C$ is the maximal unitary Higgs subbundle corresponding to the maximal subrepresentation on which $\pi_1(C)$ acts through a compact unitary group. The bundles $\mathcal E_C$, $\mathcal F_C$ and $\mathcal U_C$ have an Hodge decomposition into a $(-1,0)$ and $(0,-1)$ part. 
\begin{cor}
Let $c$ be an integer between $1$ and $ \left\lfloor \frac{g-1}{2} \right\rfloor$.
Let $C\subset \mathcal A_g$ be a curve with Higgs %\WarningV{Hodge?}\WarningL{Higgs, ho aggiunto una spiegazione} 
bundle decomposition $\mathcal E_C=\mathcal F_C\oplus \mathcal U_C$. If 
$$\rk \mathcal U_C^{-1,0}>g-c$$
then $C$ is not contained generically in $\mathcal T_g(c)$. 
\end{cor}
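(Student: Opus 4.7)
The plan is to argue by contradiction, reducing the corollary to Theorem \ref{thm:main} along the standard Lu--Zuo strategy. Suppose that $C$ is contained generically in $\mathcal{T}_g(c)$, so $C \subset \mathcal{T}_g(c)$ and $C \cap \mathcal{T}^\circ_g(c) \neq \emptyset$. Since we have fixed a level $\ell$ structure so that $\mathcal{M}_g$ is a fine moduli space, after a finite \'etale base change $\pi\colon B \to C$ if necessary, the inclusion $C \hookrightarrow \mathcal{A}_g$ lifts along the Torelli morphism to a morphism $B \to \mathcal{M}_g$. Pulling back the universal curve and completing to a smooth projective model yields a fibred surface $f\colon S \to B$ of genus $g$.

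First I would verify the hypotheses of Theorem \ref{thm:main} for $f$. Since $C$ is a curve (not a point) in $\mathcal{A}_g$, the associated family of Jacobians over $B$ is non-isotrivial, and by the Torelli theorem the corresponding family of curves is non-isotrivial as well, so $f$ is non-isotrivial. Moreover, since $C\cap \mathcal{T}_g^\circ(c)$ is a non-empty open subset of $C$, its preimage in $B$ is non-empty and open, and a general fibre of $f$ has Clifford index exactly $c$, so $c_f = c$.

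Next I would compare the unitary ranks. Under the Torelli identification, the Hodge bundle $E_f = f_*\omega_f$ agrees with the $(-1,0)$-part $(\pi^*\mathcal{E}_C)^{-1,0}$ of the pulled-back canonical Higgs bundle, and $\pi^*\mathcal{U}_C^{-1,0}$ is a flat unitary subbundle of $E_f$ (the pullback of a flat unitary bundle being still flat and unitary), hence contained in the maximal flat unitary summand $\mathcal{U}$ of $E_f$. Using also Remark \ref{rmk: base change}, according to which the unitary rank does not decrease under base change, this gives
\[ \rk \mathcal{U}_C^{-1,0} = \rk \pi^*\mathcal{U}_C^{-1,0} \leq u_f. \]
Applying Theorem \ref{thm:main} we obtain $u_f \leq g - c_f = g - c$, hence $\rk \mathcal{U}_C^{-1,0} \leq g - c$, contradicting the hypothesis $\rk \mathcal{U}_C^{-1,0} > g-c$.

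The only bookkeeping that requires some care---and it is standard in the framework of \cite{LZ-hyper, LZ}---is the clean identification of the pulled-back canonical Higgs bundle with the Hodge bundle of the associated fibred surface, together with the compatibility of their flat unitary decompositions; everything else is a direct application of the inequality (\ref{eq:main}).
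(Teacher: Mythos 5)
Your proposal is correct and is exactly the argument the paper has in mind: the paper's ``proof'' of this corollary is the single sentence that it is a straightforward application of the Lu--Zuo machinery, ``just plugging in inequality~(\ref{eq:main}) as in the proof of \cite[Theorem 1.2.2]{LZ}'', and your write-up is a faithful reconstruction of that strategy (contradiction, lift along Torelli with level structure, non-isotriviality and $c_f=c$ via semicontinuity of the Clifford index, $\rk\mathcal U_C^{-1,0}\le u_f$, then Theorem~\ref{thm:main}). One small caveat: your appeal to Remark~\ref{rmk: base change} is not quite on target---that remark concerns base change of a given fibration $f$ and semistable reduction, whereas the inequality $\rk\mathcal U_C^{-1,0}\le u_f$ here really rests on the fact that $\pi^*\mathcal U_C^{-1,0}$ is a flat unitary subbundle of $E_f$ (up to a skyscraper correction at non-semistable fibres) and that $\mathcal U$ is the \emph{maximal} such subbundle in the second Fujita decomposition; this is precisely the bookkeeping carried out in~\cite{LZ}. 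With that clarification, the proof matches the intended one.
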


\begin{rmk}
Hence in particular if 
$\rk \mathcal U_C^{-1,0}> \left\lceil \frac{g+1}{2} \right\rceil$
then, if $C$ is contained generically in the Torelli locus $\mathcal T_g$, the corresponding family of curves has not maximal Clifford index, which corresponds to not having general gonality.
\end{rmk}
\smallskip

\textbf{Acknowledgements:} The germ of this collaboration begun during the interesting Workshop  ``Birational Geometry of Surfaces'' at the University of Roma Tor Vergata on January 2016. We would like to express our gratitude toward Pietro Pirola for giving us the starting kick, and for stimulating and fruitful discussions on the topic. We thank Fabrizio Catanese for his kind interest and for giving us extremely useful suggestions.
We are also grateful to Xin Lu for pointing out a gap and a wrong conjecture in a previous version of this work. 
The first and second named authors whish to thank the Department of Mathematics of Pavia for the invitation and warm hospitality in February 2016. 

\medskip

\textbf{Basic assumptions and notation:} Throughout the whole article, all varieties are assumed to be smooth and defined over $\mC$. Unless otherwise is explicitly said, $f: S \ra B$ will be a fibration (a surjective morphism with connected fibres) from a compact surface $S$ to a compact curve $B$. The {\em genus} of $f$, defined as the genus of any smooth fibre, will be denoted by $g$, and assumed to be at least 2. 
%The {\em unitary rank} of $f$ is the rank of the unitary summand $\caU$ in Fujita's second decomposition $f_*\omega_f = \caU \oplus \caA$.

%%%%%%%%%%%%%%%%%%%%%%%%%%%%%%%%%%%%%%%%%%%%%%%%%%%%%%%%%%%%%%%%%%%%%%%%%%%%%%%%%%%%%%%%%%%%%%%%%%%%%%%%%%%%%%%%%%%%%%%%%%%%%

\section{Some results on the Hodge bundle}

\subsection{Fujita decompositions of the Hodge bundle}

Although we are primarily interested in the case of fibred surfaces, in this section we can assume that $f: X \ra B$ is a fibration over a (smooth compact) curve $B$ of a compact K\"ahler manifold $X$ of arbitrary dimension $n+1 \geq 2$. We denote by $\omega_f = \omega_X \otimes \left(f^*\omega_B\right)^{\vee}$ the relative dualizing sheaf, and by $E_f = f_*\omega_f$ the {\em Hodge bundle} of $f$, which is a vector bundle on $B$ of rank equal to the geometric genus of a general fibre of $f$. Fujita investigated the structure of $E_f$ and obtained the following results. The complete proof of the second result is done by Catanese and Dettweiler in \cite{CD:Answer}.

\begin{thm}[Fujita decompositions \cite{Fuj1,Fuj2,CD:Answer}] \label{thm:Fuj-decomp}
Let $f: X \ra B$ be a fibration as above and $E_f = f_*\omega_f$ be its Hodge bundle. Then $E_f$ admits the following decompositions as sums of vector bundles
\begin{equation} \label{eq:Fuj-decomp}
E_f = \caO_B^{\oplus h} \oplus \caF = \caU \oplus \caA,
\end{equation}
where $h=h^1\left(B,f_*\omega_X\right)$, $\caF$ is nef with $h^1\left(B,\caF\otimes\omega_B\right)=0$, $\caU$ is flat and unitary, and $\caA$ is ample.
\end{thm}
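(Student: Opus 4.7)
The plan is to extract both decompositions from the fact that on the smooth locus $B^\circ \subset B$ of $f$, the bundle $E_f|_{B^\circ}$ is the top Hodge piece of the polarized variation of Hodge structure on $R^n f_* \mC$ and carries the $L^2$ Hodge metric.

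First I would prove that $E_f$ is nef on $B$. Griffiths' curvature computation shows that the dual Hodge metric on $E_f^\vee$ is semi-positively curved over $B^\circ$; Schmid's nilpotent orbit theorem controls its asymptotic behaviour near $B \setminus B^\circ$, and combined with a semi-positive $L^2$-extension result (Fujita's original argument in the fibred surface case, Kawamata-Viehweg-Takegoshi in general) this yields that every quotient line bundle of $E_f$ on $B$ has non-negative degree. Nefness already gives two useful consequences: every non-zero map $E_f \to \caO_B$ is surjective, and $E_f^\vee$ has no line subbundle of positive degree.

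For the first decomposition, by the projection formula $f_*\omega_X = E_f \otimes \omega_B$ and Serre duality, $h = h^1(B, f_*\omega_X) = h^0(B, E_f^\vee)$. Combining this with the previous paragraph, from a basis of $H^0(B, E_f^\vee)$ one extracts a trivial quotient $E_f \twoheadrightarrow \caO_B^{\oplus h}$ whose kernel $\caF$ is again nef and satisfies $h^0(B, \caF^\vee) = 0$ (otherwise one could enlarge the trivial quotient); dually this gives a trivial subbundle $\caT \subseteq E_f$ of the right rank. To produce a genuine splitting $E_f = \caT \oplus \caF$ rather than merely an extension, I would identify $\caT$ with the $(n,0)$-piece of the maximal monodromy-invariant sub-VHS of $R^n f_* \mC$ and invoke Deligne's semisimplicity theorem, which furnishes a polarization-orthogonal sub-VHS whose $(n,0)$-piece realises $\caF$ as a direct summand. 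The vanishing $h^1(B, \caF \otimes \omega_B) = h^0(B, \caF^\vee) = 0$ then follows by Serre duality. The finer decomposition $E_f = \caU \oplus \caA$ is obtained by the same machinery with \emph{monodromy-invariant} replaced by \emph{monodromy factoring through a compact unitary subgroup}: the largest such sub-VHS exists by Deligne's semisimplicity, its $(n,0)$-piece is a flat unitary subbundle $\caU$, and the Hodge metric on the complement has strictly positive curvature in every non-trivial Kodaira-Spencer direction, forcing the complement $\caA$ to be ample.

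The main obstacle is descending these clean Hodge-theoretic statements from $B^\circ$ to all of $B$. Every construction above naturally produces an analytic or $C^\infty$ decomposition over the smooth locus, while the theorem demands an \emph{algebraic} direct sum decomposition on $B$. Making this rigorous requires using Deligne's canonical extension of the relevant sub-local systems and verifying that the Hodge-metric complement extends to a holomorphic subbundle complementary to $\caU$ on all of $B$; for the second decomposition this is the central delicate point completed by Catanese-Dettweiler in \cite{CD:Answer}.
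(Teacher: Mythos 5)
The paper does not prove this theorem: it is quoted from Fujita \cite{Fuj1,Fuj2} and Catanese--Dettweiler \cite{CD:Answer}, and only later in the section is the construction of $\caU$ over $B^o$ sketched informally. There is therefore no in-paper argument to compare yours against; I review the proposal on its own merits. Your overall strategy --- nefness via Hodge-theoretic positivity plus an $L^2$-extension argument across the discriminant, the identification $h = h^0(B,E_f^\vee)$ by the projection formula and Serre duality, the splittings from Deligne semisimplicity of the underlying polarized VHS, and an honest acknowledgment that the delicate step is extending everything over $B \setminus B^o$ --- is indeed the route of the references, and those parts are sound.

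Two steps as written are, however, wrong or too quick, and they are not peripheral. First, the dualization: dualizing $0 \ra \caF \ra E_f \ra \caO_B^{\oplus h} \ra 0$ yields a trivial subbundle of $E_f^\vee$, \emph{not} of $E_f$, and there is no self-duality of $E_f$ available (the polarization pairs $E^{n,0}$ with $E^{0,n}$, not $E^{n,0}$ with itself). So the sentence ``dually this gives a trivial subbundle $\caT \subseteq E_f$'' is simply false; the trivial subbundle must be produced, as you do only afterwards, from the $(n,0)$-piece of the constant sub-VHS, and one then still owes an argument that its rank equals $h^0(B,E_f^\vee)$. Relatedly, $\caF$ defined as the kernel of $E_f \thra \caO_B^{\oplus h}$ is a \emph{sub}bundle of a nef bundle, which need not be nef; its nefness follows only once the direct-sum decomposition is established, so the order of your argument should be reversed. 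Second, the sentence ``the Hodge metric on the complement has strictly positive curvature in every non-trivial Kodaira--Spencer direction, forcing the complement $\caA$ to be ample'' compresses away precisely the gap in Fujita's original announcement that \cite{CD:Answer} was written to close: strict Griffiths positivity over $B^o$ does not by itself yield ampleness of the canonical extension over all of $B$, because one must control the degeneration of the Hodge metric near the singular fibres and rule out asymptotic flattening. Presenting this as a one-line consequence of Griffiths' curvature formula is exactly the place where a referee would stop you.
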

The decompositions (\ref{eq:Fuj-decomp}) are the {\em first and second Fujita decompositions} of $E_f$. They are compatible, in the sense that $\caO_B^{\oplus h }\subseteq \caU$ and $\caA \subseteq \caF$. Moreover they can be combined into a finer decomposition
\begin{equation} \label{eq:combined-Fujita}
E_f = \caO_B^{\oplus h} \oplus \caU' \oplus \caA,
\end{equation}
where $\caU'$ is flat unitary and has no sections, $\caU = \caO_B^{\oplus h} \oplus \caU'$ and $\caF = \caU' \oplus \caA$. In the case $X = S$ is a surface, $h=q_f=q\left(S\right)-g\left(B\right)$ is the {\em relative irregularity} of $f$.

\begin{df} \label{df:flat-unitary-local-system}
The {\em flat unitary summand} (of $f$) is the vector bundle $\caU$ in the second Fujita decomposition. We denote by $\mU$ the associated local system, and by $\nabla=\nabla_{\caU}: \caU \ra \caU \otimes \omega_B$ the corresponding connection. The {\em flat unitary rank} of $f$ is $u_f := \rk \caU$.
\end{df}

\begin{rmk}\label{rmk: base change}
It is worth observing that the unitary rank $u_f$ is not necessarily constant  under finite base change; however it is non decreasing. That is, if $\pi: B' \ra B$ is a finite map, $X'$ is a desingularization of the fibre product $X \times_B B'$ and $f': X' \ra B'$ is the induced fibration, then we have that $u_{f'} \geq u_f$. 
A similar phenomenon happens also for the relative irregularity. 
Indeed, 
%as shown for instance in \cite[proof of Xiao's Theorem]{TAN}, 
the Hodge bundle itself is not stable under base change, but becomes ``smaller'': $E_{f'}$ is a subsheaf of $\pi^*E_f$ and the quotient is a skyscraper sheaf supported on the $\pi$-branch points $b \in B$ such that $f^{-1}\left(b\right)$ is not semistable. 
Call $\caA$ (resp.  $\caA'$) and $\caU$ (resp.  $\caU'$) the ample and unitary summands of $E_f$ (resp. $E_{f'}$); observe that the composite morphism $\caU'\subseteq \pi^*\caU\oplus \pi^*\caA\ra \pi^*\caU $ is necessarily surjective (because the quotient $\pi^*E_f / E_{f'}$ is skyscraper), hence $u_{f'}=\rk \caU' \geq \rk\pi^*\caU =u_f$, as claimed.
This fact makes it possible that somehow, using a suitable base change $\pi$, part of the ample summand of $E_f$ becomes flat in $E_{f'}$, hence $u_{f'} > u_f$. 
\end{rmk}

We now sketch the construction of the unitary summand $\caU$. For more details, please consult \cite{CD:Answer}. Let $B^o \subseteq B$ denote the Zariski-open subset of non-critical values, and $f^o: f^{-1}\left(B^o\right) \ra B^o$ the restriction of $f$ to the smooth fibres. Over $B^o$, $E_f$ can be interpreted as part of a variation of Hodge structures
$$H^0\left(f^{-1}\left(b\right),\Omega^n_{f^{-1}\left(b\right)}\right) \subset H^n\left(f^{-1}\left(b\right),\mC\right).$$
Indeed, it is a holomorphic vector subbundle of $\caH = R^nf^o_*\mC \otimes \caO_{B^o}$, which is equipped with the Gau\ss-Manin connection $\nabla_{\caH}$ associated to the local system $\mH=R^nf^o_*\mC$. The basic idea is to define $\caU_{|B^o}$ as the vector subbundle of $E_f$ spanned by $\nabla_{\caH}$-flat sections, which is  flat and unitary with respect to the restriction of $\nabla_{\caH}$. 
This construction can be extended to the whole $B$ only if the monodromy of $\mH$ around the singular fibres is unipotent. 
This holds if and only if the fibres are semistable (recalling that we are assuming  the fibration to be relatively minimal with smooth total space).
Hence, in the semistable case, the monodromy of $\caU$ around the singular fibres becomes trivial (being unipotent and unitary). In case there is non-unipotent monodromy around a singular (non-semistable) fibre, $\caU_{|B^o}$ can be a strictly smaller subbundle, being the subbundle spanned by $\nabla_{\caH}$-flat sections that are also invariant by the local monodromy around the singular fibres.
From this perspecive we also can see the fact that the unitary rank is non-decreasing under finite base change, as observed in Remark \ref{rmk: base change}; moreover we can observe that in particular, choosing a $\pi$ such that $f'$ is semistable we maximize the unitary rank.

\subsection{Flat sections of $f_*\omega_f$ and 1-forms on $S$}

We will now more explicitly recall the interplay between the Hodge bundle and extensions of differential forms on the fibres in the case of a family of curves $f: S \ra B$, i.e. a fibration %(proper morphism with connected fibres) 
of relative dimension $n=1$ over a possibly non-compact base curve $B$. In order to simplify some statements we will assume that the family is semistable, which is enough to our purposes (see the end the previous section). Let $g$ be the (arithmetic) genus of the fibres of $f$, and for any $b \in B$ we denote by $C_b=f^{-1}\left(b\right)$ the corresponding fibre (with the induced subscheme structure).

\medskip
Consider now the sheaf of relative differentials $\Omega_{S/B}^1$ defined by the short exact sequence
\beq \label{eq:rel-diff}
0 \longra f^*\omega_B \longra \Omega_S^1 \longra \Omega_{S/B}^1 \longra 0.
\enq
The restriction of (\ref{eq:rel-diff}) to any nodal $C_b$ gives  the Kodaira-Spencer class $\xi_b \in \Ext^1\left(\Omega_{C_b}^1,\caO_{C_b}\right) \otimes T_{B,b}^{\vee}$ associated to the first-order infinitesimal deformation of $C_b$ induced by $f$.
%(cf. \cite[Chap. XI]{ACG}). 

%As it is shown in \cite{Sercam}, \WarningL{toglierei questa referenza} there
There is a natural map $\Omega_{S/B}^1 \rightarrow \omega_f$, which is an isomorphism outside the singular fibres. Moreover, it is an injection if and only if every fibre of $f$ is reduced, and in particular if $f$ is semistable. In this case, $\Omega_{S/B}^1$ is torsion-free, and therefore also $f_*\Omega_{S/B}^1$ is torsion-free, hence a vector bundle on $B$. Furthermore, $f_*\Omega_{S/B}^1$ is a subsheaf of the Hodge bundle $E_f=f_*\omega_f$, and there is a connecting homomorphism
$$f_*\Omega_{S/B}^1 \stackrel{\partial}{\longra} R^1f_*\caO_S \otimes \omega_B$$
which at a regular value $b \in B$ is nothing but the cup product with the corresponding Kodaira-Spencer class $\cup\xi_b: H^0\left(C_b,\omega_{C_b}\right) \ra H^1\left(C_b,\caO_{C_b}\right) \otimes T_{B,b}^{\vee}.$ Denote by $\caK$ the kernel of $\partial$, which is a vector bundle on $B$ sitting into the short exact sequence
\begin{equation} \label{eq:sesK}
0 \longra \omega_B \longra f_*\Omega_S^1 \longra \caK \longra 0.
\end{equation}
Sections of $\caK$ correspond thus to families of forms on the fibres that locally glue to give a form on a ``tubular'' open neighbourhood of any fibre. Note that the pull-back of a form on $B$ is trivial along the fibres, which also explains the exact sequence (\ref{eq:sesK}).

Suppose now that $S$ and $B$ are compact. Since over the regular values $B^o$, the Hodge bundle $E_f$ coincides with $f_*\Omega_{S/B}^1$, it makes sense to compare $\caK$ with the Fujita decompositions of $E_f$. Indeed, the trivial summand $\caO_B^{\oplus q_f}$ of $E_f$ corresponds to the holomorphic forms on the fibres that extend to the whole surface, that is, to $H^0\left(S,\Omega_S^1\right) / f^*H^0\left(B,\Omega_B^1\right)$. As for the flat unitary summand $\caU$, we have the following result of Pirola and the third named author.
%\WarningL{Ho cambiato seguendo indicazioni di Sara, la vecchia versione e' commentata qui}

%\begin{lem}[\cite{PT}% \WarningV{improvement, also for non-semistable?}
%] \label{lem:PT1}
%If $f: S \ra B$ is a semistable fibration (with $S$ and $B$ compact), then the flat unitary summand $\caU$ of the Hodge bundle $E_f$ injects naturally into $\caK$.
%\end{lem}
%This is actually a quite natural fact. 
%Indeed, it is a non-trivial result of Griffiths (see \cite[Voisin?]{Voi1}) that, over the regular vaules $B^o$, the connecting homomorphism $\partial$ coincides with the composition
%$$E_{f^o}=f^o_*\omega_{f^o} \stackrel{\nabla_{\caH}}{\longra} \caH\otimes\omega_{B^o} \longra \left(\caH/E_{f^o}\right) \otimes \omega_{B^o} \cong R^1f^o_*\caO_{S^o} \otimes \omega_{B^o}$$
%of the Gau\ss-Manin connection (restricted to $E_{f^o}$) and the natural projection. Thus the kernel of the first map (which spans $\caU$) has to be contained into the kernel $\caK$ of $\partial$.

%Lemma \ref{lem:PT1} means that also local sections of $\caU$ should also correspond to certain holomorphic forms defined on tubular neighbourhoods of fibres. Indeed, Pirola and Torelli are able to prove the following very interesting result.
\begin{lem}[Lifting Lemma in \cite{PT}] \label{lem:PT2}
Let $f\colon S \ra B$ be a semistable fibration, $\mU$ the local system associated to the flat unitary summand, and $\Omega_{S,d}^1$ the sheaf of closed holomorphic 1-forms. Then there is a short exact sequence of sheaves (of abelian groups) on $B$
\beq \label{eq:PT-sheaves}
0 \longra \omega_B \longra f_*\Omega_{S,d}^1 \longra \mU \longra 0.
\enq
In particular, if $V \subset B$ is a disk, then there is an exact sequence of vector spaces:
\beq \label{eq:PT-sections}
0 \longra H^0\left(V,\omega_B\right) \longra H^0\left(f^{-1}\left(V\right),\Omega_{S,d}^1\right) \longra H^0\left(V,\mU\right) \longra 0.
\enq
\end{lem}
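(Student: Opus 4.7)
The plan is to construct the sheaf map $\Phi\colon f_*\Omega^1_{S,d}\to\mU$ as follows. For $V\subseteq B$ open and $\omega\in H^0(f^{-1}(V),\Omega^1_{S,d})$, the closedness of $\omega$ makes the de Rham class $[\omega]\in H^1(f^{-1}(V),\mC)$ well defined; via the topological retraction of the tube onto any smooth fibre $C_b\subset f^{-1}(V)$ and the identification of $H^1(f^{-1}(V),\mC)$ with sections of $R^1f^o_*\mC$, this produces a locally constant section of $\caH$ on $V\cap B^o$. Its fibrewise restriction is represented by $\omega|_{C_b}\in H^0(C_b,\omega_{C_b})=F^1 H^1(C_b,\mC)$, so the resulting flat section of $\caH$ takes values in the Hodge filtration $F^1\caH=E_f$; by the very definition of the flat unitary summand, a flat section of $E_f$ is precisely a local section of $\mU$, and this yields $\Phi(\omega)$.

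I would then verify exactness at the two ends. On the kernel side, if $\Phi(\omega)=0$ then $\omega|_{C_b}=0$ for every $b\in V$, so the short exact sequence $0\to f^*\omega_B\to\Omega^1_S\to\Omega^1_{S/B}\to 0$ forces $\omega=f^*\eta$ for some $\eta\in H^0(V,\omega_B)$; conversely every such pull-back is automatically closed because $\eta$ is a top form on the curve $V$, giving the inclusion $\omega_B\hookrightarrow f_*\Omega^1_{S,d}$ as the kernel of $\Phi$. For surjectivity, given $\sigma\in\mU(V)$ the Gau\ss--Manin identification produces a cohomology class $c\in H^1(f^{-1}(V),\mC)$ which is fibrewise of type $(1,0)$, hence sits in $F^1 H^1(f^{-1}(V),\mC)$ by the purity of the mixed Hodge structure on $H^1$ of a tube around a smooth fibre. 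The Hodge-to-de Rham spectral sequence for the analytic complex $\Omega^\bullet_{f^{-1}(V)}$ identifies
\[
F^1 H^1(f^{-1}(V),\mC) = \mathrm{image}\bigl(H^0(f^{-1}(V),\Omega^1_{S,d}) \longra H^1(f^{-1}(V),\mC)\bigr),
\]
so $c=[\omega]$ for a closed holomorphic $1$-form $\omega$, and the uniqueness of the $(1,0)$-representative of a fibrewise cohomology class forces $\Phi(\omega)=\sigma$. The vector-space statement (\ref{eq:PT-sections}) then follows by taking sections over the disk $V$ and using $H^1(V,\omega_B)=0$.

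The main technical obstacle is verifying the sheaf sequence across a critical value $b_0\in B\setminus B^o$. Semistability forces the local monodromy of $\mU$ around $b_0$ to be both unipotent and unitary, hence trivial, so $\mU$ extends canonically across $b_0$ as a trivial local system in a neighbourhood of $b_0$. On the closed-forms side, however, $H^1(f^{-1}(V),\mC)$ near a nodal fibre carries only a mixed Hodge structure, with $W_0$ contributions coming from the nodes of the central fibre, and one needs the limiting mixed Hodge structure of Schmid--Steenbrink together with the semistable reduction hypothesis to argue that the $F^1$-piece is still realised by closed holomorphic $1$-forms coming from the ambient surface $S$. This is precisely where semistability is used in an essential way, and is the delicate content packaged into the formulation of the Lifting Lemma in \cite{PT}.
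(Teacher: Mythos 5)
The paper does not actually prove this lemma; it is quoted verbatim from \cite{PT}, so there is no in-paper proof to compare against. Evaluating your argument on its own terms, the construction of $\Phi$ and the identification of its kernel with $\omega_B$ are fine, but the surjectivity step has a genuine gap.

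The problem is the identification
$F^1 H^1\left(f^{-1}(V),\mC\right) = \mathrm{image}\bigl(H^0(f^{-1}(V),\Omega^1_{S,d})\to H^1(f^{-1}(V),\mC)\bigr)$,
which you attribute to ``the Hodge-to-de Rham spectral sequence''. For the \emph{non-compact} analytic surface $f^{-1}(V)$ this equality is neither automatic nor, as far as I can see, even well-posed: $f^{-1}(V)$ is just an analytic open subset of $S$, so there is no canonical Deligne mixed Hodge structure, and the ``purity of the MHS of a tube around a smooth fibre'' invokes a Hodge filtration that really lives on a single fibre $H^1(C_{b_0})$, whose Hodge structure varies with $b_0$. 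More seriously, even if one defines $F^1$ as those classes $c$ with $c|_{C_b}\in H^{1,0}(C_b)$ for one (or all) $b$, the statement that this coincides with the image of closed holomorphic forms on the tube is precisely the content of the lemma, so the step is circular. Concretely: an arbitrary $\alpha\in H^0(C_{b_0},\omega_{C_{b_0}})$ does \emph{not} extend to a closed holomorphic $1$-form on a tube unless $\alpha\in\ker(\cup\xi_{b_0})$; when $\cup\xi_{b_0}$ is nontrivial this kernel is a proper subspace of $H^{1,0}(C_{b_0})$, so the ``naive $F^1$'' is strictly smaller than $H^{1,0}(C_{b_0})$ and your claimed identification fails.

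What is missing is an argument that actually \emph{uses} the Gau{\ss}--Manin flatness of $\sigma$. The correct route, consistent with the machinery assembled around this lemma in the paper (in particular the sequence $0\to\omega_B\to f_*\Omega^1_S\to\caK\to 0$ and the remark that $\partial$ factors through $\nabla_{\caH}$), is: a flat section $\sigma\in\mU(V)$ lies in $\caK$, hence lifts over the disk $V$ (since $H^1(V,\omega_B)=0$) to a holomorphic $\omega\in H^0(f^{-1}(V),\Omega_S^1)$; writing $d\omega=f^*dt\wedge\beta$ with $\beta$ a section of $\omega_f$ and applying Cartan's formula for the Gau{\ss}--Manin connection, one gets $\nabla_{\partial_t}\sigma(b)=\bigl[\beta|_{C_b}\bigr]$, and flatness forces the holomorphic form $\beta|_{C_b}$ to be cohomologically trivial, hence zero, for every $b$, so $d\omega=0$. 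This is the nontrivial closedness step that your proposal replaces with the unjustified $F^1$ identification. Finally, the extension of the sheaf sequence across the critical values (where semistability is used) is acknowledged in your last paragraph but not actually carried out; as written, the argument only covers disks $V\subset B^o$.
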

This means that the {\em flat} sections of $\caU$ correspond to 1-forms on the fibres that extend to {\em closed} 1-forms on tubular neighbourhoods, and thus arbitrary sections of $\caU$ over $V \subseteq B$ correspond to $\caO_B\left(V\right)$-linear combinations of closed 1-forms on $f^{-1}\left(V\right)$. Note that here there is a crucial difference between the local system $\mU$ and the vector bundle $\caU$. 

Then one gets immediately the following:
\begin{lem}[\cite{PT}]\label{lem: sequence}
Under the above assumptions, there is an injection of sheaves $i\colon \mathbb U \hookrightarrow \mathcal K$, that sits in the following commutative diagram:
$$
\xymatrix{
0 \ar[r]&\omega_B\ar[r]\ar[d]& f_*\Omega_{S,d}^1\ar[d]\ar[r]& \mU\ar[d]_i\ar[r] &0\\
0 \ar[r]&\omega_B\ar[r] &f_*\Omega_{S}^1\ar[r]& \mathcal K\ar[r] &0
}
$$
\end{lem}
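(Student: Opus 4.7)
The plan is to realize the desired morphism of short exact sequences as arising from the natural inclusion of closed 1-forms into all 1-forms on $S$, and then to deduce the injectivity of $i$ from the snake lemma.

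First I would observe the tautological inclusion of sheaves $\Omega_{S,d}^1 \hookrightarrow \Omega_S^1$ on $S$. Applying the left-exact functor $f_*$ yields an injection $\iota\colon f_*\Omega_{S,d}^1 \hookrightarrow f_*\Omega_S^1$, which will serve as the middle vertical arrow of the diagram.

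Next I would check that the left square commutes, i.e.\ that the inclusion $\omega_B \hookrightarrow f_*\Omega_S^1$ from (\ref{eq:sesK}) factors through $\iota$. This inclusion is induced by the pullback $f^*\omega_B \hookrightarrow \Omega_S^1$ coming from (\ref{eq:rel-diff}). The key observation is that pullbacks of 1-forms from a curve are automatically closed: locally in a coordinate $t$ on $B$, a section of $\omega_B$ looks like $\phi(t)\,dt$, and $d\bigl(f^*(\phi(t)\,dt)\bigr) = f^*\bigl(d(\phi(t)\,dt)\bigr) = 0$ since $dt \wedge dt = 0$ on the curve $B$. Hence $f^*\omega_B$ lands inside $\Omega_{S,d}^1$, and pushing forward provides the required factorization. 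The analogous inclusion $\omega_B \hookrightarrow f_*\Omega_{S,d}^1$ of the top row of the Lifting Lemma must agree with this factorization, since both are $f_*$ applied to the same inclusion of $f^*\omega_B$.

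Once the left square commutes, the universal property of the cokernel produces a unique morphism $i\colon \mathbb{U} \to \mathcal{K}$ making the right square commute. To conclude, I would invoke the snake lemma, which yields the exact sequence
$$0 = \ker(\mathrm{id}_{\omega_B}) \longra \ker(\iota) \longra \ker(i) \longra \coker(\mathrm{id}_{\omega_B}) = 0.$$
Since $\iota$ is injective, this immediately forces $\ker(i) = 0$, proving the desired injectivity. The argument is essentially formal; the only substantive point is the elementary remark that pulled-back 1-forms from a curve are automatically closed, so no real obstacle arises.
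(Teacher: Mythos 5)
Your proposal is correct. The paper cites \cite{PT} for this lemma and does not reproduce its proof; the remark it appends offers a Gau\ss--Manin--flavored heuristic which works a priori only over the regular locus $B^o$, by identifying $\partial$ with the projection of the Gau\ss--Manin connection restricted to $E_{f^o}$, so that the flat sections spanning $\caU$ automatically land in $\caK=\ker\partial$. Your argument is the purely sheaf-theoretic route that makes the paper's phrase ``then one gets immediately'' precise: push forward the tautological inclusion $\Omega^1_{S,d}\hookrightarrow\Omega^1_S$, note that $f^*\omega_B\subset\Omega^1_{S,d}$ because $d$ commutes with pullback and $\omega_B\wedge\omega_B=0$ on a curve (hence the left square commutes), and then extract $i$ from the universal property of the cokernel and its injectivity from the snake lemma. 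This works uniformly over all of $B$ and needs nothing beyond the two short exact sequences (\ref{eq:sesK}) and (\ref{eq:PT-sheaves}). One small imprecision worth cleaning up: the vertical inclusions $\omega_B\hookrightarrow f_*\Omega^1_{S,d}$ and $\omega_B\hookrightarrow f_*\Omega^1_S$ are not literally ``$f_*$ applied to'' the inclusions on $S$; they are the adjoint maps $\omega_B\to f_*f^*\omega_B\to f_*\Omega^1_{S,\bullet}$. They agree for the reason you indicate (both are adjoint to the same inclusion of $f^*\omega_B$), but it is cleaner to state it via adjunction rather than direct pushforward.
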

\begin{rmk}
The inclusion of Lemma \ref{lem: sequence} is actually a quite natural fact. 
Indeed, %it is a non-trivial result
% (see \cite[Chapter 10]{Voi1}) that, 
over the regular vaules $B^o$, the connecting homomorphism $\partial$ coincides with the composition
$$E_{f^o}=f^o_*\omega_{f^o} \stackrel{\nabla_{\caH}}{\longra} \caH\otimes\omega_{B^o} \longra \left(\caH/E_{f^o}\right) \otimes \omega_{B^o} \cong R^1f^o_*\caO_{S^o} \otimes \omega_{B^o}$$
of the Gau\ss-Manin connection (restricted to $E_{f^o}$) and the natural projection. Thus the kernel of the first map (which spans $\caU$) has to be contained in the kernel $\caK$ of $\partial$.
\end{rmk}

%In any case, this shows that local sections of $\caU$ correspond to 1-forms on smooth fibres that extend to a tubular neigbourhood, and in particular to the first infinitesimal neighbourhood. 

%\WarningV{Sara, vuoi dire qualcosa in piu? magari piu introduzione/spiegazioni dal risultato? potrebbe succedere che sezione arbitrarie di $\caU$ corrispondono a (tutte) le sezione di $f_*\Omega_S^1$ (modulo quelle di $\omega_B$)?}

\subsection{Supporting divisors}% and adjoint images}

We recall now some definitions and results concerning infinitesimal deformations of curves already introduced in \cite{ColPir, Rigid, BGN}. %\WarningL{Toglierei le prime due referenze} 
Let $C$ be a smooth curve and $\xi \in H^1\left(C,T_C\right)$ the Kodaira-Spencer class of a first-order infinitesimal deformation $C \hra \caC$,
which corresponds to the exact sequence of vector bundles on $C$
$$0 \ra N_{C/\caC}^{\vee} \cong \caO_C \ra \Omega_{\caC|C}^1 \ra \omega_C \ra 0.$$

\begin{df} \label{df:supp-infinit}
We say that $\xi$ is {\em supported on} an effective divisor $D$ if and only if
\beqn
\xi \in \ker\left(H^1\left(C,T_C\right) \longra H^1\left(C,T_C\left(D\right)\right)\right).
\enqn
Furthermore, $\xi$ is {\em minimally} supported on $D$ if it is not supported on any strictly effective subdivisor $D' < D$.
\end{df}

Equivalently, $\xi$ is supported on $D$ if and only if the subsheaf $\omega_C\left(-D\right)$ lifts to $\Omega_{\caC|C}^1$, i.e. the inclusion $\omega_C\left(-D\right) \hra \omega_C$ factors through $\Omega_{\caC|C}^1 \ra \omega_C$.

Recall that the connecting homomorphism in the long exact sequence of cohomology associated to $\xi$ is given by cup-product with $\xi$
$$\partial_{\xi}: H^0\left(C,\omega_C\right) \stackrel{\cup\xi}{\longra} H^1\left(C,\caO_C\right).$$

\begin{df} \label{df-rk}
The {\em rank} of $\xi$ is defined as $$\rk \xi = \rk \partial_{\xi} = \rk\left(\cup\xi\right).$$
\end{df}

The rank of $\xi$ is the most important numerical invariant for our purposes, and is related to supporting divisors by the following

\begin{thm}[Lemma 2.3 and Thm 2.4 in \cite{BGN}] \label{thm-supp-bound}
Suppose $\xi$ is supported on $D$. Then $H^0\left(C,\omega_C\left(-D\right)\right) \subseteq \ker \partial_{\xi}$. In particular,
\beqn
\rk \xi \leq \deg D - r\left(D\right).
\enqn
If moreover $\xi$ is minimally supported on $D$, then
\beqn
\rk \xi \geq \deg D - 2r\left(D\right) = \Cliff\left(D\right).
\enqn
\end{thm}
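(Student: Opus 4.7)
I would split the argument into two pieces, corresponding to the two bounds, with the first being essentially formal and the second requiring a Serre-duality reformulation together with a multiplication-map argument.

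For the upper bound $\rk \xi \leq \deg D - r(D)$: the hypothesis that $\xi$ is supported on $D$ is, by the remark right after Definition \ref{df:supp-infinit}, equivalent to the inclusion $\omega_C(-D) \hookrightarrow \omega_C$ factoring through $\Omega^1_{\caC|C} \to \omega_C$. Hence every global section of $\omega_C(-D)$ lifts to a section of $\Omega^1_{\caC|C}$, and the long exact sequence of cohomology attached to $0 \to \caO_C \to \Omega^1_{\caC|C} \to \omega_C \to 0$ shows that such a section is annihilated by the connecting map $\partial_\xi = \cup\xi$. Therefore $H^0(C,\omega_C(-D)) \subseteq \ker \partial_\xi$, and a standard Riemann--Roch computation gives $h^0(\omega_C(-D)) = g - \deg D + r(D)$. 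Rank--nullity then yields
\begin{equation*}
\rk \xi \;=\; g - \dim \ker \partial_\xi \;\leq\; g - h^0\bigl(\omega_C(-D)\bigr) \;=\; \deg D - r(D).
\end{equation*}

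For the lower bound under minimal supportedness, I would reinterpret everything via Serre duality. The class $\xi \in H^1(C,T_C)$ becomes a linear functional on $H^0(C,\omega_C^{\otimes 2})$, and
\begin{equation*}
\ker \partial_\xi \;=\; \bigl\{\omega \in H^0(\omega_C)\;:\; \xi\bigl(\omega \cdot H^0(\omega_C)\bigr) = 0 \bigr\}.
\end{equation*}
Minimality of $D$ means that for each point $P \in \Supp D$, $\xi$ fails to be supported on $D-P$, so there is some $\omega_P \in H^0(\omega_C(-D+P))$ with $\partial_\xi(\omega_P) \neq 0$. Fixing a basis $s_0,\ldots,s_{r(D)}$ of $H^0(C,\caO_C(D))$, I would then study the multiplication map
\begin{equation*}
\mu : H^0(\caO_C(D)) \otimes H^0\bigl(\omega_C(-D)\bigr) \longrightarrow H^0(\omega_C),
\end{equation*}
and use a base-point-free-pencil-style argument, together with the non-vanishing test forms $\omega_P$ provided by minimality, to show that $\ker \partial_\xi$ exceeds $H^0(\omega_C(-D))$ by at most $r(D)$ dimensions. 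Rearranging this gives $\rk \xi \geq \deg D - 2r(D) = \Cliff(D)$.

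The main obstacle is this last dimensional estimate: translating the negative statement ``$\xi$ is not supported on any strict subdivisor of $D$'' into a sharp quantitative upper bound on $\dim(\ker \partial_\xi / H^0(\omega_C(-D)))$. The combinatorics of how the linear series $|D|$ interacts with the Serre-dual cup-product pairing is the delicate point; the other ingredients (the lift-characterization of supportedness, Riemann--Roch, and the Serre-duality rewriting of $\partial_\xi$) are essentially formal.
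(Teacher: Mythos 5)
The paper does not actually prove this statement; it is quoted verbatim from \cite{BGN} (Lemma 2.3 and Theorem 2.4 there), so there is no ``paper's own proof'' to compare against. Judging your proposal on its own merits:

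Your argument for the first assertion and the upper bound $\rk \xi \leq \deg D - r(D)$ is correct and complete: supportedness gives the factorization $\omega_C(-D) \hookrightarrow \Omega_{\caC|C}^1 \twoheadrightarrow \omega_C$, the long exact cohomology sequence of $0 \to \caO_C \to \Omega_{\caC|C}^1 \to \omega_C \to 0$ then puts $H^0(C,\omega_C(-D))$ inside $\ker\partial_\xi$, and Riemann--Roch plus Serre duality give $h^0(\omega_C(-D)) = g - \deg D + r(D)$, whence the bound. Nothing to object to here.

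For the lower bound, however, you have not given a proof: you describe a Serre-duality reformulation and a hoped-for multiplication-map argument, and then you yourself flag the decisive step (``translating \dots into a sharp quantitative upper bound on $\dim(\ker\partial_\xi/H^0(\omega_C(-D)))$'') as an unresolved obstacle. That is the entire content of the second inequality, so the proposal has a genuine gap. The idea you are missing is to exploit minimality \emph{sheaf-theoretically} rather than via Serre duality. The lift $\iota : \omega_C(-D) \hookrightarrow \Omega_{\caC|C}^1$ has line-bundle quotient of degree $\deg D$ with torsion-free part $\caL$; if $\caL \neq \caO_C(D)$, i.e.\ if the quotient has torsion, then the saturation of $\iota(\omega_C(-D))$ is a sub-line-bundle of the form $\omega_C(-D')$ with $D' < D$ which still maps to $\omega_C$ by the natural inclusion, so $\xi$ is supported on $D'$, contradicting minimality. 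Thus minimality is precisely the statement that
\begin{equation*}
0 \longrightarrow \omega_C(-D) \longrightarrow \Omega_{\caC|C}^1 \longrightarrow \caO_C(D) \longrightarrow 0
\end{equation*}
is exact. Taking $H^0$ there gives $h^0(\Omega_{\caC|C}^1) \leq h^0(\omega_C(-D)) + r(D) + 1$, while the sequence $0 \to \caO_C \to \Omega_{\caC|C}^1 \to \omega_C \to 0$ gives $h^0(\Omega_{\caC|C}^1) = 1 + \dim\ker\partial_\xi$. Combining these two and using $h^0(\omega_C(-D)) = g - \deg D + r(D)$ yields $\dim\ker\partial_\xi \leq g - \deg D + 2r(D)$, i.e.\ $\rk\xi \geq \deg D - 2r(D) = \Cliff(D)$. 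I would recommend replacing the Serre-duality/multiplication-map sketch by this argument: it is short, it shows exactly where minimality enters, and it avoids the combinatorics of the linear series $|D|$ that you (correctly) identify as delicate.
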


Note that $H^1\left(C,T_C\left(D\right)\right)=0$ if $D$ is ample enough (e.g. $\deg D > 2g\left(C\right)-2$), hence such $D$ supports any deformation $\xi$. However, these divisors are uninteresting, since for example the inequalities provided by Theorem \ref{thm-supp-bound} are trivial. We are therefore interested in the existence of small supporting divisors, which can be constructed using the adjoint theorem (see \cite{ColPir}).%\WarningL{Io toglierei PZ03}
In \cite{Rigid} the following numerical sufficient condition for the existence of supporting divisors is obtained.

\begin{pro}[\cite{Rigid} Corollary 3.1] \label{pro:num-cond-inf}
If $\dim\ker\left(\partial_{\xi}\right) > \frac{g\left(C\right)+1}{2}$, then there is a two-dimensional subspace $W \subseteq \ker\left(\partial_{\xi}\right) \subset H^0\left(C,\omega_C\right)$ whose base divisor $D$ supports $\xi$.
\end{pro}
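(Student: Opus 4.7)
The plan is to use the Collino-Pirola adjoint image construction together with a Chern-class argument on the Grassmannian $G(2,K)$, where $K:=\ker(\partial_\xi)$.

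First I would set up the adjoint. For a two-dimensional subspace $W=\langle\omega_1,\omega_2\rangle\subseteq K$, the vanishing $\omega_i\cup\xi=0$ ensures each $\omega_i$ lifts to a section $\tilde\omega_i\in H^0(C,\Omega^1_{\caC|C})$. Using the canonical isomorphism $\det\Omega^1_{\caC|C}\cong\omega_C$ coming from the sequence $0\to\caO_C\to\Omega^1_{\caC|C}\to\omega_C\to 0$, one defines the adjoint
\[
\alpha(W):=\tilde\omega_1\wedge\tilde\omega_2\in H^0(C,\omega_C),
\]
which is well-defined modulo $W$: changing the $\tilde\omega_i$ by constant multiples of the generator of $\caO_C\subseteq\Omega^1_{\caC|C}$ modifies the wedge product by an element of $W$.

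The key technical input, in the spirit of \cite{ColPir}, is the adjoint theorem: the base divisor $D$ of $W$ supports $\xi$ if and only if $\alpha(W)\in W$. One direction is immediate: if $\omega_C(-D)\hra\omega_C$ lifts to a line subbundle $\caL\subseteq\Omega^1_{\caC|C}$, then one may choose lifts with $\tilde\omega_i\in H^0(\caL)$, whence $\tilde\omega_1\wedge\tilde\omega_2\in H^0(\wedge^2\caL)=0$. For the converse, $\alpha(W)\in W$ permits adjusting the lifts so that $\tilde\omega_1\wedge\tilde\omega_2=0$; the saturation of the subsheaf $\langle\tilde\omega_1,\tilde\omega_2\rangle\subseteq\Omega^1_{\caC|C}$ then becomes a line subbundle whose projection to $\omega_C$ is injective with image precisely $\omega_C(-D)$, providing the required lifting.

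It therefore suffices to find $W\in G(2,K)$ with $\alpha(W)\in W$. Put $V:=H^0(C,\omega_C)$ and let $\caS$ be the tautological rank-$2$ subbundle on $G(2,K)$. The adjoint construction is $GL(W)$-equivariant with the correct determinantal weight, so $\bar\alpha$ is a well-defined section of the rank-$(g-2)$ vector bundle
\[
\caE:=\bigl((V\otimes\caO_G)/\caS\bigr)\otimes(\det\caS)^{\vee}.
\]
Under the hypothesis $k:=\dim K>(g+1)/2$, one has $\dim G(2,K)=2(k-2)\geq g-2=\rk\caE$, so the expected dimension of the zero locus is non-negative. Moreover $(V\otimes\caO_G)/\caS$ is the restriction of the (ample) universal quotient bundle from $G(2,V)$, and $(\det\caS)^{\vee}$ is the ample generator of $\Pic G(2,K)$, hence $\caE$ is ample. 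By positivity of Chern classes of ample bundles, $c_{g-2}(\caE)\neq 0$, so every section of $\caE$ has non-empty vanishing locus; any $W$ in the zero locus of $\bar\alpha$ is the required subspace.

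The main obstacle is the correct formulation and verification of the adjoint theorem, especially the saturation argument in its non-trivial direction — converting the vanishing $\tilde\omega_1\wedge\tilde\omega_2=0$ into a genuine line subbundle of $\Omega^1_{\caC|C}$ projecting isomorphically onto $\omega_C(-D)$. This is a delicate but essentially local computation following \cite{ColPir}; the subsequent Chern class / ampleness step, while slightly technical, is standard intersection theory on Grassmannians.
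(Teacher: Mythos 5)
Your proposal follows essentially the same route as the proof in \cite{Rigid} (and, before it, Pirola's argument on Xiao's conjecture): encode the Collino--Pirola adjoint as a holomorphic section $\bar\alpha$ of the rank-$(g-2)$ bundle $\caE=\bigl((V\otimes\caO_G)/\caS\bigr)\otimes(\det\caS)^{\vee}$ on $G(2,K)$, observe that the hypothesis $\dim K>\frac{g+1}{2}$ gives $\dim G(2,K)=2(\dim K-2)\geq g-2=\rk\caE$, and conclude by a positivity-of-Chern-classes argument that $\bar\alpha$ must vanish somewhere; the adjoint theorem then produces the supporting divisor. One small slip in your justification of ampleness: the universal quotient bundle on $G(2,V)$ is globally generated and nef but \emph{not} ample (its restriction to a line in the Grassmannian has a trivial summand once $\dim V\geq 4$); the ampleness of $\caE$ should instead be derived from the standard fact that a globally generated bundle tensored with an ample line bundle is ample (it is a quotient of a direct sum of copies of that line bundle), after which Fulton--Lazarsfeld positivity gives $c_{g-2}(\caE)\neq 0$ as you need.
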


We extend now the previous definitions to the case of one-dimensional families of curves as done in \cite{Rigid}. In order to simplify the exposition, we will assume that the family has no singular members, though the theroy can be developed in more general cases.

Let $f:S\ra B$ be a family of smooth curves, i.e. a {\em smooth} proper surjective morphism with connected fibres from a (non-necessarily compact) surface $S$ to a (non-necessarily compact) curve $B$. Denote by $C_b = f^{-1}\left(b\right)$ the fibre over a point $b \in B$, which is smooth of genus $g$, and by $\xi_b \in H^1\left(C_b,T_{C_b}\right)$ the Kodaira-Spencer class of the first-order deformation of $C_b$ induced by $f$ (well-defined up to non-zero scalar multiplication).

\begin{df} \label{df:supp-family}
We say that an effective divisor $\caD\subset S$ supports $f$ if the restrictions $D_b=\caD_{|C_b}$ support $\xi_b$ for general $b \in B$.
\end{df}

Note that Definition \ref{df:supp-family} only considers general fibres, hence we can always assume that a divisor $\caD$ supporting a family $f$ contains no fibre.

The following result is a simplification of some results in \cite[Sec. 2]{Rigid}, which is enough for the setting of this work.

\begin{lem} \label{lem:local-splitting}
Suppose that $B$ is a disk, $f: S \ra B$ has no singular fibres, and $\caD \subset S$ is a divisor supporting $f$ such that $\caD \cdot F < 2g-2$ for any fibre $F$. Then the inclusion $\omega_f\left(-\caD\right) \hra \omega_f$ factors uniquely as
$$\omega_f\left(-\caD\right) \stackrel{\iota}{\hra} \Omega_S^1 \twoheadrightarrow \omega_f.$$
\end{lem}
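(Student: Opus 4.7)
The plan is to view the existence of $\iota$ as a splitting problem and to identify its obstruction. A lift $\iota$ making the composition with $\Omega_S^1 \twoheadrightarrow \omega_f$ equal to the natural inclusion $\omega_f(-\caD)\hra\omega_f$ is the same as a splitting of the pull-back to $\omega_f(-\caD)$ of the extension
$0 \to f^*\omega_B \to \Omega_S^1 \to \omega_f \to 0$;
any such splitting is automatically injective, since as a map of line bundles it is generically injective. Its obstruction sits in $\Ext^1(\omega_f(-\caD), f^*\omega_B) \cong H^1(S, f^*\omega_B \otimes \omega_f^{-1}(\caD))$, while uniqueness of the lift is controlled by $\Hom(\omega_f(-\caD), f^*\omega_B) \cong H^0(S, f^*\omega_B \otimes \omega_f^{-1}(\caD))$.

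Both groups I would compute by pushing forward to $B$. The projection formula gives $R^i f_*\bigl(f^*\omega_B \otimes \omega_f^{-1}(\caD)\bigr) \cong \omega_B \otimes R^i f_*\omega_f^{-1}(\caD)$. The hypothesis $\caD\cdot F<2g-2$ forces $\deg\omega_{C_b}^{-1}(D_b)<0$ on every fibre, so $R^0 f_*\omega_f^{-1}(\caD)=0$, which already settles uniqueness. By cohomology and base change, the vanishing of $R^0$ together with constancy of $h^1(C_b,\omega_{C_b}^{-1}(D_b))$ implies that $R^1 f_*\omega_f^{-1}(\caD)$ is locally free on $B$ and the natural base-change map is an isomorphism onto $H^1(C_b,\omega_{C_b}^{-1}(D_b))\cong\Ext^1(\omega_{C_b}(-D_b),\caO_{C_b})$. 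Since $B$ is a disk, the Leray spectral sequence collapses and identifies the obstruction group with $H^0(B,\omega_B\otimes R^1 f_*\omega_f^{-1}(\caD))$.

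To finish, I would evaluate the obstruction class $\eta$ at a general fibre $C_b$. Under the base-change identification (and a local trivialization of $\omega_B$ near $b$), $\eta(b)$ is precisely the pull-back along $\omega_{C_b}(-D_b)\hra\omega_{C_b}$ of the Kodaira–Spencer class $\xi_b$ of the restricted extension $0 \to \caO_{C_b} \to \Omega_S^1|_{C_b} \to \omega_{C_b} \to 0$. By Definition \ref{df:supp-infinit} and the assumption that $\caD$ supports $f$, this pull-back vanishes for $b$ in a dense open subset of $B$; thus $\eta$ is a section of a locally free sheaf on an irreducible curve vanishing on a dense open, hence identically zero. This produces the required lift $\iota$, which is unique by the previous vanishing. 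The delicate point I expect is the compatibility of the obstruction $\eta$ with its fibrewise restrictions via base change: once local freeness of $R^1 f_*\omega_f^{-1}(\caD)$ is ensured by the degree hypothesis, the reduction of existence and uniqueness of $\iota$ to the fibrewise supporting condition becomes formal.
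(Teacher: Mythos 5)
Your proposal is correct and follows essentially the same route as the paper's proof: both identify the lifting problem's obstruction in $\Ext^1\left(\omega_f\left(-\caD\right),f^*\omega_B\right)$, identify that group with global sections of a locally free sheaf on $B$ whose fibre at $b$ is $\Ext^1\left(\omega_{C_b}\left(-D_b\right),\caO_{C_b}\right)\otimes T_{B,b}^{\vee}\cong H^1\left(C_b,T_{C_b}\left(D_b\right)\right)\otimes T_{B,b}^{\vee}$, evaluate the obstruction pointwise as the image of the Kodaira--Spencer class $\xi_b$, and invoke the supporting hypothesis plus torsion-freeness to conclude it vanishes identically, while uniqueness follows in both from $\Hom\left(\omega_f\left(-\caD\right),f^*\omega_B\right)=0$ via the degree hypothesis. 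The only cosmetic difference is that you rederive the identification of the obstruction group through the Leray spectral sequence and cohomology-and-base-change, whereas the paper cites \cite[Lemma 2.4]{Rigid} for the same relative-Ext/base-change package; the ``delicate point'' you flag (compatibility of $\eta$ with fibrewise restriction) is precisely what that cited lemma and the surrounding discussion in \cite{Rigid} supply.
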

\begin{proof}
%\WarningV{Tutta la dimostrazione \`e nuova, magari alcuni dettagli possono \`essere riferiti anche a \cite{Rigid}.}

Note first that $\deg\left(\omega_f\left(-\caD\right)_{|F}\right)=2g-2-\caD \cdot F > 0$ for any fibre $F$. Hence $f_*\left(\omega_f\left(-\caD\right)^{\vee}\right)=0$ and
$$\Hom\left(\omega_f\left(-\caD\right),f^*\omega_B\right)=H^0\left(B,f_*\left(\omega_f\left(-\caD\right)\right)^{\vee} \otimes \omega_B\right)=0.$$
The (left-exact) functor $\Hom\left(\omega_f\left(-\caD\right),-\right)$ applied to the exact sequence
$$\xi: \quad 0 \longra f^*\omega_B \longra \Omega_S^1 \longra \omega_f \longra 0$$
gives
$$0 \longra \Hom\left(\omega_f\left(-\caD\right),\Omega_S^1\right) \longra \Hom\left(\omega_f\left(-\caD\right),\omega_f\right) \stackrel{\mu}{\longra} \Ext^1\left(\omega_f\left(-\caD\right),f^*\omega_B\right).$$
The image of a morphism of sheaves $\phi: \omega_f\left(-\caD\right) \ra \omega_f$ by the map $\mu$ is $\phi^* \xi$, the class of the short exact sequence obtained from $\xi$ by pull-back in the last term and completing the diagram. Thus the natural inclusion $\iota$ factors through $\Omega_S^1$ if and only if $\mu\left(\iota\right)=0$.

By \cite[Lemma 2.4]{Rigid}, $\Ext^1\left(\omega_f\left(-\caD\right),f^*\omega_B\right) \cong H^0\left(B,\caE\right)$, where $\caE = \caExt_f^1\left(\omega_f\left(-\caD\right),\caO_S\right)\otimes\omega_B$ is a vector bundle whose fibre over a point $b \in B$ is
$$\Ext^1\left(\omega_{F_b}\left(-\caD_{|F_b}\right),T_{B,b}^{\vee}\right)\cong H^1\left(F_b,T_{F_b}\left(\caD_{|F_b}\right)\right)\otimes T_{B,b}^{\vee}$$
(cf. \cite[Lemma 2.3, Proposition 2.1 and Appendix]{Rigid}). Moreover, under this isomorphism $\mu\left(\iota\right)$ corresponds to a section $\sigma$ of $\caE$, whose value $\sigma\left(b\right)$ at any $b\in B$ is the class of the restriction to $F_b$ of the pull-back $\iota^*\xi$. Since this pull-back and the restriction clearly conmute, $\sigma\left(b\right)$ coincides with the image of $\xi_b \in H^1\left(F_b,T_{F_b}\right)$ in $H^1\left(F_b,T_{F_b}\left(\caD_{|F_b}\right)\right)$, which is generically zero (hence identically zero, $\caE$ being torsion-free) because the family $f$ is supported on $\caD$. This shows that $\mu\left(\iota\right)=0$ and completes the proof.
\end{proof}

Note that, in particular, if $\caD$ supports $f$, then we have the following injection
$$f_*\omega_f\left(-\caD\right) \subseteq \caK = \ker\left(f_* \omega_f \ra R^1f_*\caO_S \otimes \omega_B\right).$$

\begin{pro} \label{prop:local-supp-div} Suppose $\rk \caK > \frac{g+1}{2}$. Then there is an open disk $V \subseteq B$ and a divisor $\caD \subset f^{-1}\left(V\right)$ {\em minimally} supporting the restriction of $f$ and such that $h^0\left(C_b,\omega_{C_b}\left(-\caD_{|C_b}\right)\right) \geq 2$ and $H^0\left(C_b,\omega_{C_b}\left(-\caD_{|C_b}\right)\right) \subseteq \caK_b$ for any fibre $C_b$ with $b \in V$.
\end{pro}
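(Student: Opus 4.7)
The plan is to apply Proposition \ref{pro:num-cond-inf} fibrewise and globalize the construction. The hypothesis $\rk\caK>\frac{g+1}{2}$ means that $\dim\ker\partial_{\xi_b}=\dim\caK_b>\frac{g+1}{2}$ at every regular fibre, so Proposition \ref{pro:num-cond-inf} applies uniformly in $b$. First, shrink $B$ to an open disk $V$ on which $f$ is smooth (which is possible since singular fibres are isolated) and $\caK|_V\cong\caO_V^{\oplus k}$ is trivial, with $k=\rk\caK$.

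The core task is to produce two sections $s_1,s_2\in H^0(V,\caK)\subseteq H^0(V,f_*\omega_f)=H^0(f^{-1}(V),\omega_f)$ whose fibrewise pencil $W_b=\langle s_1(b),s_2(b)\rangle\subseteq\caK_b$ belongs, for every $b\in V$, to the non-empty ``good locus'' $Z_b\subseteq\mathrm{Gr}(2,\caK_b)$ of $2$-dimensional subspaces whose base divisor supports $\xi_b$. Under the trivialization of $\caK|_V$, the loci $\{Z_b\}_{b\in V}$ assemble into an analytic subset $Z\subseteq V\times\mathrm{Gr}(2,k)$, defined by the holomorphic vanishing of the adjoint-type obstruction underlying the proof of Proposition \ref{pro:num-cond-inf}; by that proposition, $Z$ dominates $V$, so after possibly shrinking $V$ a holomorphic section $V\to Z$ exists by analytic selection, which unfolds into the required pair $s_1,s_2$. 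The rational map $[s_1:s_2]\colon f^{-1}(V)\dashrightarrow\mP^1$ then has a base locus whose horizontal part defines an effective divisor $\caD\subset f^{-1}(V)$: by construction, $\caD|_{C_b}$ is the base divisor of $W_b$ in $H^0(C_b,\omega_{C_b})$ and hence supports $\xi_b$, so $\caD$ supports the restricted family. Since $s_1|_{C_b},s_2|_{C_b}$ are linearly independent in $H^0(C_b,\omega_{C_b}(-\caD|_{C_b}))$, we get $h^0(C_b,\omega_{C_b}(-\caD|_{C_b}))\geq 2$; Theorem \ref{thm-supp-bound} then yields $H^0(C_b,\omega_{C_b}(-\caD|_{C_b}))\subseteq\ker\partial_{\xi_b}=\caK_b$.

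To achieve minimality, replace $\caD$ by an effective subdivisor $\caD'\leq\caD$ minimal among those still supporting the restricted family, obtained by descending induction on components. The inclusion $\omega_{C_b}(-\caD'|_{C_b})\supseteq\omega_{C_b}(-\caD|_{C_b})$ preserves $h^0\geq 2$, and applying Theorem \ref{thm-supp-bound} to $\caD'$ again gives $H^0(C_b,\omega_{C_b}(-\caD'|_{C_b}))\subseteq\caK_b$. The main hurdle is the globalization step in the middle paragraph: while Proposition \ref{pro:num-cond-inf} is merely a pointwise existence result, its proof produces the pencil as a solution of an adjoint-type equation that is holomorphic in $b$; this makes $Z$ analytic and dominant over $V$, so after shrinking $V$ a coherent choice of pencil exists, and the rest of the argument is purely formal bookkeeping.
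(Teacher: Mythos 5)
Your proposal follows the same route as the paper: both apply Proposition~\ref{pro:num-cond-inf} fibrewise and globalize the resulting pencil into a rank-2 subbundle $\caW\subseteq\caK$ (the paper by citing the corresponding lemmas in \cite{Rigid}, you via the analytic-selection argument on the adjoint vanishing locus $Z\subseteq V\times\mathrm{Gr}(2,k)$), then take the divisorial base locus of the relative evaluation map and pass to a minimal supporting subdivisor. Your write-up merely makes explicit two points the paper leaves implicit, namely the holomorphic dependence of the pencil on $b$ and the use of Theorem~\ref{thm-supp-bound} for the final inclusion $H^0\bigl(C_b,\omega_{C_b}(-\caD_{|C_b})\bigr)\subseteq\caK_b$.
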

\begin{proof}
The proof is analogous to that of Theorem 3.3 and Corollary 3.2 in \cite{Rigid}. Roughly speaking, applying Proposition \ref{pro:num-cond-inf} pointwise we can locally (over any disk $V \subseteq B$) find a rank-2 vector subbundle $\caW \subseteq \caK$ such that the divisorial base locus $\caD$ of the relative evaluation map
$$f^*\caW \longra f^*f_*\omega_f \longra \omega_f$$
supports $f$. Up to shrinking $V$ we may assume that $\caD$ consists of disjoint sections of $f$ over $V$. If $\caD$ is not minimal supporting $f$, we can remove some of the components (or reduce their multiplicities) until obtaining a minimal one.
\end{proof}

%%%%%

\subsection{ A Castelnuovo-de Franchis theorem for tubular surfaces}

We now give the proof of Theorem \ref{thm:tubularCdF}. Our argument follows the one of Beauville \cite[Proposition X.6]{Beau}.
Since all the 1-forms $\omega_1,\ldots,\omega_k$ are pointwise proportional, there are meromorphic functions $g_2,\ldots,g_k$ on $S$ such that $\omega_i = g_i\omega_1$. Differentiating these equalities and using that $d\omega_i=0$, we obtain $0 = dg_i \wedge \omega_1$ for $i = 2,\ldots,k$. In particular, there is another meromorphic function $g_1$ such that $\omega_1 = g_1 dg_2$, and also $0 = dg_1 \wedge dg_2$. This means that the meromorphic differentials $dg_2, \ldots, dg_k$ are pointwise proportional to $\omega_1$ (wherever they are defined) and also to $dg_1$, hence $dg_i \wedge dg_j = 0$ for any $i,j$. 

These functions define a meromorphic map $\psi: S \dra \mP^k$ as $$\phi(p) = \left(1:g_1(p):g_2(p):\ldots:g_k(p)\right).$$ Let $\epsilon: \hat{S} \ra S$ be a resolution of the indeterminacy locus of $\psi$, and let $\hat{\psi}: \hat{S} \ra \mP^k$ be the corresponding holomorphic map. Note that this resolution might not exist on the original surface due to the possible existence of infinitely many indeterminacy points. However, over a smaller disk there are only finitely many such points, and each of them can be resolved after finitely many blow-ups. If $\left(x_1,\ldots,x_k\right)$ are affine coordinates on $\left\{X_0\neq0\right\} \subset \mP^k$, then by construction $\epsilon^*\omega_1 = \hat{\psi}^*\left(x_1dx_2\right)$ and $\epsilon^*\omega_i = \hat{\psi}^*\left(x_1x_idx_2\right)$ for any $i \geq 2$. Furthermore $\hat{\psi}^*\left(dx_i \wedge dx_j\right) = dg_i \wedge dg_j = 0$, which implies that the image of $\hat{\psi}$ is locally an analytic curve $\hat{C} \subset \mP^k$. Since $\hat{S}$ is smooth, $\hat{\psi}$ factors throught the normalization $\nu: C \ra \hat{C}$, giving a holomorphic map $\hat{\phi}:\hat{S} \ra C$.

The meromorphic 1-forms on $C$ defined as $\alpha_1 = \nu^*\left(x_1dx_2\right)$ and $\alpha_i = \nu^*\left(x_1x_idx_2\right)$ (for $i\geq 2$) verify that $\hat{\phi}^*\alpha_i = \epsilon^*\omega_i$ are holomorphic. A straightforward computation in local coordinates shows that 
$$\Div\left(\hat{\phi}^*\alpha_i\right) = \hat{\phi}^*\Div\left(\alpha_i\right) + \sum \left(n_j-1\right) E_j,$$
where the $E_j$ are the irreducible components of the fibres of $\hat{\phi}$, and $n_j$ is the corresponding multiplicity. Since the $\hat{\phi}^*\alpha_i$ are holomorphic, these divisors are effective, but any contribution of a pole of $\alpha_i$ to $E_j$ would be smaller or equal than $-n_j$. Therefore the $\alpha_i$ are holomorphic 1-forms on $C$.

To conclude, let $F \subset \hat{S}$ be a general fibre of $\epsilon \circ f$, and let $\pi: F \ra C$ be the map induced by $\hat{\phi}$. Since the $\pi^*\alpha_i = \left(\epsilon^*\omega_i\right)_{|F}$ are linearly independent by hypothesis, in particular they are not zero and $\pi$ is hence surjective. This implies that $C$ is indeed a compact curve of genus $g\left(C\right) \geq k$.

Note that, {\em a fortiori}, the resolution of indeterminacy $\epsilon$ is not necessary, since every $\epsilon$-exceptional divisor is contracted by $\hat{\phi}$ because $g\left(C\right) \geq  k \geq 2$.

%%%%

\section{Bounding $u_f$: proof of Theorem \ref{thm:main}}

Recall that the {\em Clifford index} of a smooth projective curve $C$ is defined as
$$\Cliff\left(C\right) = \min \left\{\Cliff\left(D\right)=\deg D - 2r\left(D\right) \,|\, h^0\left(C,\caO_C\left(D\right)\right),h^1\left(C,\caO_C\left(D\right)\right) \geq 2\right\}.$$
It satisfies $0 \leq \Cliff\left(C\right) \leq \left\lfloor\frac{g-1}{2}\right\rfloor$, and the second is an equality for general $C \in \caM_g$. Given a fibration $f:S \ra B$, the {\em Clifford index} of $f$ is $c_f$, the maximal Clifford index of the smooth fibres, which is attained over a Zariski-open subset of $B$.

First of all, since the flat unitary rank $u_f$ does not decrease by finite base change, we can assume that the fibration is semistable by applying the semistable reduction. Consider now an open disk $V \subseteq B$ such that:
\begin{itemize}
\item $V$ contains only regular values and the corresponding smooth fibres have Clifford index $c_f$, and
\item the connecting homomorphism $\partial: f_*\Omega_{S/B}^1 \ra R^1f_*\caO_S \otimes \omega_B$ has constant rank on $V$.
\end{itemize}
From now on, let $f: S \ra B$ denote only the restriction to $f^{-1}\left(V\right) \ra V$. Thus $f$ is a smooth fibration, $\Omega_{S/B}^1 \cong \omega_f$, and $\caK = \ker \partial$ is a vector bundle whose fibre $K_b$ over any $b \in B$ is exactly $\ker\left(\cup\xi_b: H^0\left(C_b,\omega_{C_b}\right) \ra H^1\left(C_b,\caO_{C_b}\right)\right)$.

\subsection{Proof of inequality  (\ref{eq:main})} 
Suppose by contradiction that $u_f > g-c_f$. Since $c_f \leq \left\lfloor\frac{g-1}{2}\right\rfloor$, in particular $u_f > \frac{g+1}{2}$. Therefore, by Proposition \ref{prop:local-supp-div} there is (up to shrinking $B$) a divisor $\caD \subset S$ {\em minimally} supporting $f$ such that
$$h^1\left(C_b,\caO_{C_b}\left(\caD_{|C_b}\right)\right)=h^0\left(C_b,\omega_{C_b}\left(-\caD_{|C_b}\right)\right) \geq 2$$
for general $b \in B$. 

As in the proof of \cite[Theorem 1.2]{BGN}, we consider now two cases:
\begin{enumerate}
\item[Case 1:] The divisor $\caD$ is relatively rigid, that is $h^0\left(C_b,\caO_{C_b}\left(\caD_{|C_b}\right)\right) = 1$ for a general $b \in B$. Then Theorem \ref{thm-supp-bound}, together with Riemann-Roch, implies that
$$H^0\left(C_b,\omega_{C_b}\left(-\caD_{|C_b}\right)\right) = K_b,$$
and hence $\caK=f_*\omega_f\left(-\caD\right)$.

Let now $\omega_f\left(-\caD\right) \hra \Omega_S^1$ be the map  provided by Lemma \ref{lem:local-splitting}, which gives a splitting $\caK \hra f_*\Omega_S^1$. Thus every section of $\caK$ corresponds to a 1-form on $S$, and in particular every $w \in \ker \cup\xi_b \subseteq H^0\left(C_b,\omega_{C_b}\right)$ can be extended to $S$. Moreover, all these extensions are sections of the same line bundle $\omega_f\left(-\caD\right)$, and therefore any two such extensions wedge to 0. % (this means in particular that any adjoint image is 0).

Let now $\eta_1,\ldots,\eta_{u_f}$ be a basis of flat sections of $\caU \subseteq \caK$, %\WarningL{Ma noi abbiamo proprio questa inclusione di fasci?}
i.e. a basis of $H^0\left(V,\mU\right)$, and let $\omega_1,\ldots,\omega_{u_f} \in H^0\left(S,\Omega_S^1\right)$ be the extensions provided by the splitting  above. By the previous discussion, any two of these forms  wedge to zero. We are thus in the situation of Theorem \ref{thm:tubularCdF}, which gives a new fibration $\phi: S \ra C$ over a compact curve of genus $g\left(C\right) = u_f$. Let now $C_b$ be any fibre of $f$, and $\pi: C_b \ra C$ be the restriction of $\phi$ to the smooth fibre $C_b$. Applying Riemann-Hurwitz we obtain
\beqn
2g-2 \geq \deg\pi \, (2u_f-2).
\enqn
At the beginning of the proof we obtained that $u_f > \frac{g+1}{2}$, so that $2u_f-2 > g-1$, and thus
\beqn
2(g-1) > \deg\pi \, (g-1).
\enqn
It follows that $\deg\pi=1$, so every smooth fibre is isomorphic to $C$ and hence $f$ is isotrivial.

\item[Case 2:] The divisor $\caD$ moves on any smooth fibre, i.e. $h^0\left(C_b,\caO_{C_b}\left(\caD_{|C_b}\right)\right) \geq 2$ for every regular value $b \in B$. We use Theorem \ref{thm-supp-bound} to obtain
\beq \label{ineq-final-proof}
\rk \xi_b \geq \Cliff\left(\caD_{|C_b}\right) \geq c_f.
\enq
%\WarningL{Ma non abbiamo $\geq c_f$ nella formula qui sopra?}
But $\caU_b \subseteq \ker \partial_{\xi_b} = K_{\xi_b}$, so that $\rk \xi_b = g - \dim K_{\xi_b} \leq g - u_f$, and the inequality (\ref{ineq-final-proof}) implies that
\beqn
g - u_f \geq c_f,
\enqn
contradicting our very first hypothesis.
\end{enumerate}
% \WarningL{NEW}
\subsection{Proof  of inequality (\ref{eq:fnp})}
Let us now assume that the general fibre of $f$ is a plane curve of degree $d \geq 5$, and note that the Clifford index is thus $c_f=d-4$. 
%\WarningV{Abbiamo davvero l'ugualianza? non abbiamo solo $c_f \leq d-4$?}  % < \left\lfloor\frac{g-1}{2}\right\rfloor$.
%Assume by contradiction that $u_f>g-c_f-1$, i.e. $u_f\geq g-c_f$. Since $c_f < \left\lfloor\frac{g-1}{2}\right\rfloor$, in particular $u_f > \frac{g+1}{2}$, and the very same arguments of the first part of the theorem apply, so that we can locally construct a divisor $\caD \subset S$ {\em minimally} supporting $f$ such that
%$$h^1\left(C_b,\caO_{C_b}\left(\caD_{|C_b}\right)\right)=h^0\left(C_b,\omega_{C_b}\left(-\caD_{|C_b}\right)\right) \geq 2$$
%for general $b \in B$. 
%We can again divide the proof in two parts according wether $\caD$ is relatively rigid or not. In the first case, the proof is identical as above, using the tubolar Castelnuovo de Franchis theorem. For the second case, 
If $C$ is a general fibre and $0\neq\xi \in H^1\left(C,T_C\right)$ is the induced infinitesimal deformation, \cite{FNP}[Theorem 1.3] gives directly
$\rk \xi \geq d-3= c_f+1$, and hence $$\rk \mathcal{U} \leq \dim \ker \xi = g - \rk \xi \leq g-c_f-1.$$
%Then the argument can be pushed thorugh as in Case 2 with this stronger inequality, leading to a contradiction.

\section{Examples}\label{sec:examples}
%\WarningL{Modificato qua e la'}
In this last section we analize some examples satisfying the equality $u_f=\left\lceil\frac{g+1}{2}\right\rceil$. 
The examples of Pirola and Albano-Pirola satisfy $u_f=\left\lceil \frac{g+1}{2}\right\rceil$, and moreover they are extremal with respect to (\ref{eq:BGN}), as we verify in Example \ref{ex:AP}.
The examples of Catanese and Dettweiler (in the standard case in the notations of \cite{CD:Vector-bundles}) satisfy the above equality, but are not extremal with respect to (\ref{eq:main}), as they are not of general gonality, as proved here: in fact the gonality is less than or equal to three. Note that these examples have $q_f=0$.

It remains open to find examples with both $c_f$ and $u_f$ big, but $q_f$ small. The first step is to investigate the non-standard examples of Catanese and Dettweiler \cite{CD:Vector-bundles}.

%The examples constructed by Lu and Zuo \cite[Examples 5.1 and 5.2]{L-Z-BSconjecture} of non isotrivial fibred surfaces with odd genus $g$ and $q_f=(g+1)/2$ are over $\mathbb P^1$ and over an elliptic curve respectively, hence they have $u_f=q_f$ by Xiao's result \cite{Xiao-P1} and by the result of Barja \cite{barja-fujita} (possibly after a base change) respectively.

\begin{ex}[cf. \cite{alb-pir}] \label{ex:AP}
We first consider some non-isotrivial fibrations $f:S \rightarrow B$ constructed by Albano and Pirola in \cite{alb-pir} with invariants $(q_f,g)=(4,6), (6,10)$. %\WarningV{Dobbiamo assumere che $u_f$ e $q_f$ sono massimale. Ok dopo cambiamentto di base, ma allora $q_f$ potrebbe crescere rispetto a quelle calculate per Albano-Pirola. Potete chiedere Pietro se/come si vede che la sua $q_f$ \`e massimale?} 
We will now show that these fibrations satisfy $q_f=u_f$.
 
The general fibres $C_b$ of $f$ are simultaneously double covers of a fixed curve $D$ and \'etale cyclic covers (of prime order $p$) of hyperelliptic curves $E_b$ with simple Jacobian variety. Moreover, the Jacobian of $C_b$ is isogenous to $J\left(D\right) \times J\left(D\right) \times J\left(E_b\right)$, and the simplicity of $J\left(E_b\right)$ implies that the relative irregularity of $f$ is $q_f = 2 g\left(D\right)$ and does not grow under base change.

The curves $E_b$'s form an hyperelliptic fibration $h: T \rightarrow B$ and (possibly after finite base change) there is a compatible \'etale cyclic cover $S \rightarrow T$ of order $p$. Thus the Hodge bundle of $h$ is a direct summand of the Hodge bundle of $f$, and is complementary to the trivial summand $\mathcal{O}_B^{\oplus q_f} = \mathcal{O}_B^{\oplus 2g\left(D\right)}$ that corresponds to the constant factors $J\left(D\right) \times J\left(D\right)$ of the Jacobians (up to isogeny). Thus the non-trivial flat unitary summand $\mathcal{U}'$ of $E_f$ (see equation (\ref{eq:combined-Fujita})) coincides with the flat unitary summand of $E_{h}$.

Suppose that $u_f > q_f$, that is, $\rk \mathcal{U}' > 0$. Since $h$ is an hyperelliptic fibration, by \cite[Theorem Theorem 4.7]{LZ-hyper} the monodromy representation of $\pi_1\left(B\right)$ corresponding to $\mathcal{U}'$ has finite non-trivial image. But this would provide a finite \'etale covering $\pi: \widetilde{B} \rightarrow B$ such that $\pi^* \mathcal{U}'$ is a trivial summand of the Hodge bundle of base-change of $h$, contradicting the simplicity of the Jacobian of the general fibres of $h$.
\end{ex}

\begin{ex}[cf. \cite{CD:Semiample} and \cite{CD:Vector-bundles}] \label{ex:CD}
Catanese and Dettweiler construct infinite families of  non-isotrivial fibrations $f\colon S \rightarrow B$ with big values of $u_f$. %Moreover the flat unitary summand $\mathcal{U}$ is the sum of two rank-2 flat unitary vector bundles with infinite monodromy, and thus the flat unitary rank does not increase under base change.
We will now show that the Clifford index of these fibrations is $c_f=1 < \left\lfloor\frac{g-1}{2}\right\rfloor$, hence it is not extremal for Theorem \ref{thm:main}.

We only recall the (few) elements of the construction of the so-called standard case (\cite[Definition 4.1]{CD:Vector-bundles}) needed for our purpose. 
Let $n$ be any integer such that $GCD(n,6)=1$. The smooth fibres of $f$ are cyclic coverings $\pi: C \rightarrow \mathbb{P}^1$ of degree $n$ branched on 4 points. 
These curves can be realized as the normalizations of the plane curves with equation (we use the same notations as in \cite{CD:Vector-bundles}):
$$
C_x: z_1^n-y_0y_1(y_1-y_0)(y_1-x y_0)^{n-3}=0,
$$
with $x \in \mathbb{C}\setminus\{0,1\}$ and coordinates $(y_0:y_1:z_1)\in \mathbb P^2$.
Now, we can just observe that the point $(1:x:0)$ has multiplicity $n-3$ in the curve $C_x$, so by projecting from this point we get a $3$ to $1$ map to $\mathbb P^1$, hence a $g^1_3$ on the normalization of $C_x$.

%Moreover, a crucial step in Catanese-Dettweiler's construction is that $H^0\left(C,\omega_C\right)$ has two 2-dimensional subspaces $V_1,V_2$ invariant by the action of $\mathbb{Z}/7\mathbb{Z}$. Each of these invariant subspaces corresponds to an invariant $g^1_{10}$, which has to be of the form $\left|V_i\right|=\pi^*\left|\mathcal{O}_{\mathbb{P}^1}\left(1\right)\right|+D_i$, for (different) effective divisors $D_1,D_2$ of degree 3. But since both linear series consist of canonical divisors, $D_1$ and $D_2$ are linearly equivalent, generating a $g^1_3$. Thus the smooth fibres of $f$ are trigonal, hence of Clifford index 1.
\end{ex}

\bibliography{biblio}
\bibliographystyle{alpha}

\noindent{Gottfried Wilhelm Leibniz Universit\"at Hannover,
Institut f\"ur Algebraische Geometrie\\
Welfengarten 1, 30167 Hannover (Germany)}\\
\textsl{gonzalez@math.uni-hannover.de}

\medskip
\noindent{Dipartimento di Scienza e Alta Tecnologia, Universit\`a dell'Insubria,\\ Via Valleggio 11, 22100, Como, Italy}\\
\textsl{lidia.stoppino@uninsubria.it}
\medskip

\noindent{Dipartimento di Matematica,Universit\`a di Pavia,\\ Via Ferrata 1, 27100, Pavia, Italy}\\
\textsl{sara.torelli02@universitadipavia.it}

\end{document}